\newtheorem{thm}{Theorem}[section]
\newtheorem{cor}[thm]{Corollary}
\newtheorem{lem}[thm]{Lemma}
\newtheorem{prp}[thm]{Proposition}
\theoremstyle{definition}
\newcommand{\scr}[1]{\mathscr #1}
\definecolor{wco}{rgb}{0.5,0.2,0.3}
\numberwithin{equation}{section} \theoremstyle{remark}
\newcommand{\ua}{\uparrow}
\title{
{\bf  Degenerate SDEs with Singular Drift and Applications to Heisenberg Groups}
\footnote{Supported in
 part by  NNSFC (11771326, 11431014).}}
\author{
{\bf Xing Huang$^{a)}$ and Feng-Yu Wang$^{a),b)}$   }\\
\footnotesize{$^{a)}$Center of Applied Mathematics, Tianjin University, Tianjin 300072, China}\\
 \footnotesize{$^{b)}$Department of Mathematics,
Swansea University, Singleton Park, SA2 8PP, United Kingdom}}
\begin{document}
\numberwithin{equation}{section}
\def\theequation{\arabic{section}.\arabic{equation}}

\newcommand{\be}{\begin{eqnarray}}
\newcommand{\ee}{\end{eqnarray}}
\newcommand{\ce}{\begin{eqnarray*}}
\newcommand{\de}{\end{eqnarray*}}
\newtheorem{theorem}{Theorem}[section]
\newtheorem{lemma}[theorem]{Lemma}
\newtheorem{remark}[theorem]{Remark}
\newtheorem{definition}[theorem]{Definition}
\newtheorem{proposition}[theorem]{Proposition}
\newtheorem{Examples}[theorem]{Example}
\newtheorem{corollary}[theorem]{Corollary}

\def\Re{{\mathrm{Re}}}
\def\DD{\Delta}
\def\Im{{\mathrm{Im}}}
\def\var{{\mathrm{var}}}
\def\HS{{\mathrm{\tiny HS}}}
\def\eps{\varepsilon}
\def\t{\tau}
\def\e{\mathrm{e}}
\def\vr{\varrho}
\def\th{\theta}
\def\a{\alpha}
\def\om{\omega}
\def\Om{\Omega}
\def\v{\mathrm{v}}
\def\u{\mathbf{u}}
\def\w{\mathrm{w}}
\def\p{\partial}
\def\d{\delta}
\def\g{\gamma}
\def\l{\lambda}
\def\la{\langle}
\def\ra{\rangle}
\def\[{{\Big[}}
\def\]{{\Big]}}
\def\<{{\langle}}
\def\>{{\rangle}}
\def\({{\Big(}}
\def\){{\Big)}}
\def\bz{{\mathbf{z}}}
\def\by{{\mathbf{y}}}
\def\bx{{\mathbf{x}}}
\def\tr{\mathrm {tr}}
\def\W{{\mathcal W}}
\def\Ric{{\rm Ricci}}
\def\osc{{\rm osc}}
\def\Cap{\mbox{\rm Cap}}
\def\sgn{\mbox{\rm sgn}}
\def\mathcalV{{\mathcal V}}
\def\Law{{\mathord{{\rm Law}}}}
\def\dif{{\mathord{{\rm d}}}}
\def\dis{{\mathord{{\rm \bf d}}}}
\def\Hess{{\mathord{{\rm Hess}}}}
\def\min{{\mathord{{\rm min}}}}
\def\Vol{\mathord{{\rm Vol}}}
\def\bbbn{{\rm I\!N}}
\def\no{\nonumber}
\def\={&\!\!=\!\!&}
\def\bt{\begin{thm}}
\def\et{\end{thm}}
\def\bl{\begin{lem}}
\def\el{\end{lem}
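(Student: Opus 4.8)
For the degenerate SDE with singular drift introduced above, the plan is to prove existence and uniqueness of a (strong) solution by the Zvonkin transform (Itô--Tanaka trick), adapted to the hypoelliptic structure of the system. Write the equation as a coupled pair $(X_t^{(1)},X_t^{(2)})$ in which the Brownian noise --- and the singular drift $b$ --- act only on the non-degenerate component $X^{(2)}$, while $X^{(1)}$ is transported by a field depending on $X^{(2)}$, the hypoellipticity coming from non-degeneracy of the $x^{(2)}$-derivative of the coupling term. Fixing a horizon $[0,T]$, I would introduce the auxiliary Kolmogorov equation $\p_t u_t + L_t u_t + (b_t\cdot\nabla)u_t = \l u_t - b_t$ on $[0,T]$ with $u_T=0$, where $L_t$ is the (time-dependent) generator of the degenerate diffusion. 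The core analytic input is a maximal $L^q([0,T];L^p)$-regularity estimate for this degenerate Kolmogorov operator; granting it, one chooses $\l$ large so that $\|\nabla u\|_\infty$ is as small as desired and $\Theta_t:=\mathrm{id}+u_t$ is, uniformly in $t\in[0,T]$, a near-identity bi-Lipschitz diffeomorphism with $t\mapsto\Theta_t$ suitably regular.

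The second step is to apply a generalized (Krylov--R\"ockner type) Itô formula to $Y_t:=\Theta_t(X_t)$. Since $u$ only satisfies $\nabla u\in C_b$ and $\nabla^2 u\in L^q([0,T];L^p)$ in the non-degenerate directions, the classical Itô formula is unavailable and must be recovered by mollifying $u$, applying Itô to the mollified version, and passing to the limit; this requires Krylov-type bounds on the occupation density of any candidate solution, which in the degenerate case again follow from the maximal regularity of $L_t$ restricted to the directions where the needed second derivatives of $u$ live. Using the defining PDE the singular term disappears, $L_t u + b\cdot\nabla u + b = \l u$, and $Y_t$ is found to satisfy a transformed equation $\dif Y_t = \tilde Z_t(Y_t)\,\dif t + \tilde\sigma_t(Y_t)\,\dif W_t$ whose coefficients are locally Lipschitz (or satisfy the continuity/one-sided conditions guaranteeing strong existence and pathwise uniqueness), being built only from the smooth data and from $u,\nabla u$.

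Finally, strong existence and pathwise uniqueness for the regular system $Y$ are classical; transforming back by $X_t=\Theta_t^{-1}(Y_t)$ --- valid because $\Theta_t^{-1}$ is Lipschitz with the required time-regularity --- produces a strong solution of the original SDE, and uniqueness follows by applying the same transform to an arbitrary solution. The finite-horizon construction patches to a global one via Gronwall estimates on $Y$, giving non-explosion and the claimed moment bounds.

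I expect the main obstacle to be twofold, and essentially the same difficulty seen twice: establishing the degenerate maximal $L^q([0,T];L^p)$-regularity, and legitimizing the generalized Itô formula for $\Theta_t(X_t)$. Because the second-order part acts only on some of the variables, the usual non-degenerate Krylov estimate does not apply; one must exploit the hypoelliptic smoothing provided by the coupling together with the anisotropic (parabolic/group) scaling, and arrange every estimate so that no derivative of $b$ in the degenerate directions is ever needed, $b$ having low regularity only in the non-degenerate directions. The Heisenberg-group applications then follow by specializing this framework to the sub-Laplacian realized as such a degenerate operator on $\mathbb{R}^{2d+1}$.
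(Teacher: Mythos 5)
Your outline follows the same broad Zvonkin strategy as the paper (solve a resolvent-type equation for $u$, transform $\tt X_t\mapsto \tt X_t+u_t(\tt X_t)$, use Krylov estimates to justify the generalized It\^o formula), but two of your key claims would fail in this degenerate setting. First, the transformed equation does \emph{not} have locally Lipschitz coefficients: the second derivatives of $u$ are only controlled in $L^q_p$ (and only in the $\si$-directions at that), so the new drift and diffusion are merely of Sobolev regularity. The paper therefore cannot "conclude by classical well-posedness for $Y$"; instead it proves pathwise uniqueness directly, via the Crippa--De Lellis maximal-function inequality $|f(x)-f(y)|\le C|x-y|(\M|\nn f|(x)+\M|\nn f|(y))$, the Krylov estimate applied to $\M\Phi$ with $\Phi$ built from $\nn Z$, $\nn u$, $\|\nn\nn_\si\theta\|^2$, and a Dol\`eans-Dade exponential representation of $|\theta(\tt X_t)-\theta(\tt Y_t)|^2$. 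This replacement is not cosmetic; it is the step that makes the argument close.

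Second, your expectation that "no derivative of $b$ in the degenerate directions is ever needed" is wrong here, and this is precisely where the degeneracy bites. For $\theta_t=\mathrm{id}+u_t$ to be bi-Lipschitz on all of $\R^{m+d}$ one needs $\|\nn_y u\|_\infty$ small, but the semigroup gradient bound in the degenerate directions is $|\nn_y P_t f|\le c\,t^{-1}(P_t|f|^p)^{1/p}$, whose $t^{-1}$ singularity is not integrable against $\int_0^T\e^{-\ll t}\,\d t$. The paper compensates by moving a fractional derivative $(-\DD_y)^{1/4}$ onto the drift, which is why Theorem 3.1(2) requires $h\b\in\H_y^{1/2,p,q}$ (the analogue of Zhang's $(1-\DD_x)^{1/3}b$ condition). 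Relatedly, the core analytic input is not a maximal $L^q([0,T];L^p)$-regularity theorem for the hypoelliptic Kolmogorov operator — the paper never proves such a Calder\'on--Zygmund-type statement. It uses instead the ultracontractivity $\|P_t\|_{p\to\infty}\preceq t^{-(m+2d)/(2p)}$ together with Bismut-type first- and second-order derivative formulas from the Heisenberg-group literature, which yield $L^p\to L^\infty$ bounds on $Q_\ll f$, $\nn_\si Q_\ll f$, $\nn_\si\nn_\si Q_\ll f$; these suffice for both the Krylov estimate and the contraction producing $\Xi_\ll\b$. If you want to carry out your program, you must either supply these semigroup estimates or prove the degenerate maximal regularity you invoke, and you must replace the "Lipschitz transformed coefficients" step by a Sobolev-coefficient uniqueness argument.
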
}
\def\br{\begin{remark}}
\def\er{\end{remark}}
\def\bd{\begin{definition}}
\def\ed{\end{definition}}
\def\bp{\begin{prp}}
\def\ep{\end{prp}}
\def\bc{\begin{cor}}
\def\ec{\end{cor}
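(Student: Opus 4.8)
There is nothing to prove. The text presented as the ``final statement'' is not a theorem, lemma, proposition, or claim; it is a pair of \TeX\ macro definitions belonging to the block of abbreviations at the start of the document. These definitions merely introduce two control sequences that expand, respectively, to the opening and closing delimiters of the \emph{corollary} environment declared earlier. They assert no mathematical object, no hypothesis, and no conclusion, so there is no proposition whose truth could be established or refuted, and hence no proof to be written.

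Consequently I can offer no proof strategy: a proof presupposes a statement with mathematical content, whereas what is displayed is purely typesetting infrastructure. The excerpt plainly terminates inside the document's setup, before the first genuine result; the closing definition is even truncated with an unbalanced brace, confirming that the cut falls within the abbreviation block rather than after any mathematical assertion. Once the paper's first actual corollary or theorem is supplied --- presumably a consequence of the strong well-posedness of the degenerate, singular-drift SDEs announced in the title, applied to diffusions on Heisenberg groups --- a proof plan can be formulated. The natural expectation is that such a corollary would be obtained by specializing the main well-posedness theorem to the explicit Heisenberg generator, so the real work would lie in verifying that the structural hypotheses of that theorem are met in the Heisenberg setting, rather than in devising a new argument.
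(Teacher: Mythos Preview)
Your assessment is correct: the displayed ``statement'' is merely the \TeX\ macro definitions \verb|\def\bc{\begin{cor}}| and \verb|\def\ec{\end{cor}}| from the preamble's abbreviation block, not a mathematical assertion, so there is nothing to prove and the paper itself contains no proof of it. Your diagnosis that the excerpt was cut inside the setup macros rather than at an actual result is exactly right.
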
}
\def\bx{\begin{Examples}}
\def\ex{\end{Examples}}

\def\cA{{\mathcal A}}
\def\cB{{\mathcal B}}
\def\cC{{\mathcal C}}
\def\cD{{\mathcal D}}
\def\cE{{\mathcal E}}
\def\cF{{\mathcal F}}
\def\cG{{\mathcal G}}
\def\cH{{\mathcal H}}
\def\cI{{\mathcal I}}
\def\cJ{{\mathcal J}}
\def\cK{{\mathcal K}}
\def\cL{{\mathcal L}}
\def\cM{{\mathcal M}}
\def\cN{{\mathcal N}}
\def\cO{{\mathcal O}}
\def\cP{{\mathcal P}}
\def\cQ{{\mathcal Q}}
\def\cR{{\mathcal R}}
\def\cS{{\mathcal S}}
\def\cT{{\mathcal T}}
\def\cU{{\mathcal U}}
\def\cV{{\mathcal V}}
\def\cW{{\mathcal W}}
\def\cX{{\mathcal X}}
\def\cY{{\mathcal Y}}
\def\cZ{{\mathcal Z}}

\def\mA{{\mathbb A}}
\def\mB{{\mathbb B}}
\def\mC{{\mathbb C}}
\def\mD{{\mathbb D}}
\def\mE{{\mathbb E}}
\def\mF{{\mathbb F}}
\def\mG{{\mathbb G}}
\def\mH{{\mathbb H}}
\def\mI{{\mathbb I}}
\def\mJ{{\mathbb J}}
\def\mK{{\mathbb K}}
\def\mL{{\mathbb L}}
\def\mM{{\mathbb M}}
\def\mN{{\mathbb N}}
\def\mO{{\mathbb O}}
\def\mP{{\mathbb P}}
\def\mQ{{\mathbb Q}}
\def\mR{{\mathbb R}}
\def\mS{{\mathbb S}}
\def\mT{{\mathbb T}}
\def\mU{{\mathbb U}}
\def\mV{{\mathbb V}}
\def\mW{{\mathbb W}}
\def\mX{{\mathbb X}}
\def\mY{{\mathbb Y}}
\def\mZ{{\mathbb Z}}

\def\bB{{\mathbf B}}
\def\bP{{\mathbf P}}
\def\bQ{{\mathbf Q}}
\def\bE{{\mathbf E}}
\def\1{{\mathbf{1}}}

\def\sA{{\mathscr A}}
\def\sB{{\mathscr B}}
\def\sC{{\mathscr C}}
\def\sD{{\mathscr D}}
\def\sE{{\mathscr E}}
\def\sF{{\mathscr F}}
\def\sG{{\mathscr G}}
\def\sH{{\mathscr H}}
\def\sI{{\mathscr I}}
\def\sJ{{\mathscr J}}
\def\sK{{\mathscr K}}
\def\sL{{\mathscr L}}
\def\sM{{\mathscr M}}
\def\sN{{\mathscr N}}
\def\sO{{\mathscr O}}
\def\sP{{\mathscr P}}
\def\sQ{{\mathscr Q}}
\def\sR{{\mathscr R}}
\def\sS{{\mathscr S}}
\def\sT{{\mathscr T}}
\def\sU{{\mathscr U}}
\def\sV{{\mathscr V}}
\def\sW{{\mathscr W}}
\def\sX{{\mathscr X}}
\def\sY{{\mathscr Y}}
\def\sZ{{\mathscr Z}}\def\E{\mathbb E}

\def\fM{{\mathfrak M}}
\def\fA{{\mathfrak A}}

\def\geq{\geqslant}
\def\leq{\leqslant}
\def\ge{\geqslant}
\def\le{\leqslant}

\def\c{\mathord{{\bf c}}}
\def\div{\mathord{{\rm div}}}
\def\iint{\int\!\!\!\int}

\def\sb{{\mathfrak b}}

\def\bH{{\mathbf H}}
\def\bW{{\mathbf W}}
\def\bP{{\mathbf P}}
\def\bA{{\mathbf A}}\def\si{\sigma}
\def\bT{{\mathbf T}} \def\R{\mathbb R}\def\nn{\nabla}
\def\ff{\frac} \def\R{\mathbb R}  \def\ff{\frac} \def\ss{\sqrt} \def\B{\mathscr B}
\def\N{\mathbb N} \def\kk{\kappa} \def\m{{\bf m}}
\def\dd{\delta} \def\DD{\Dd} \def\vv{\varepsilon} \def\rr{\rho}
\def\<{\langle} \def\>{\rangle} \def\GG{\Gamma} \def\gg{\gamma}
  \def\nn{\nabla} \def\pp{\partial} \def\EE{\scr E}
\def\d{\text{\rm{d}}} \def\bb{\beta} \def\aa{\alpha} \def\D{\scr D}
  \def\si{\sigma} \def\ess{\text{\rm{ess}}}
\def\beg{\begin} \def\beq{\begin{equation}}  \def\F{\scr F}
\def\Ric{\text{\rm{Ric}}} \def\Hess{\text{\rm{Hess}}}
\def\e{\text{\rm{e}}} \def\ua{\underline a} \def\OO{\Omega}  \def\oo{\omega}
 \def\tt{\tilde} \def\Ric{\text{\rm{Ric}}}
\def\cut{\text{\rm{cut}}} \def\P{\mathbb P} \def\ifn{I_n(f^{\bigotimes n})}
\def\C{\mathscr C}      \def\aaa{\mathbf{r}}     \def\r{r}
\def\gap{\text{\rm{gap}}} \def\prr{\pi_{{\bf m},\varrho}}  \def\r{\mathbf r}
\def\Z{\mathbb Z} \def\vrr{\varrho} \def\l{\lambda}\def\ppp{\preceq}
\def\L{\mathscr L}\def\Tt{\tt} \def\TT{\tt}\def\II{\mathbb I}\def\ll{\lambda} \def\LL{\Lambda}
\def\b{\mathbf b}\def\u{{\mathbf u}}\def\E{\mathbb E} \def\BB{\mathbb B}\def\Q{\mathbb Q} \def\M{\scr M}
\def\H{\mathbb H}\def\DD{\Delta} \def\QQ{\mathbf Q}

\allowdisplaybreaks
\maketitle
  \begin{abstract} By using the ultracontractivity of a reference diffusion semigroup, Krylov's estimate is established for a class of degenerate SDEs with singular drifts, which leads to existence and pathwise uniqueness by means of Zvonkin's transformation. The main result is applied to singular SDEs on generalized Heisenberg groups.
\end{abstract} \noindent

\noindent
 {\bf AMS subject Classification:}\ 60H15,  35R60.   \\
\noindent
{\bf Keywords:}
Degenerate SDEs,  Krylov estimate, Zvonkin transformation, Heisenberg group.

\rm

\section{Introduction}

Since 1974 when Zvonkin \cite{Zv} proved the well-posedness of the Brownian motion  with bounded drifts, his argument (known as Zvonkin's transformation) has been developed  for more general models with singular drifts, see
  \cite{Ve, Kr-Ro, Zh0, Xi-Zh} and references within for non-degenerate SDEs, and \cite{AB1}-\cite{AB4} and \cite{Fe-Fl, W} for non-degenerate semilinear SPDEs.  In these references only Gaussian noise is considered, see also \cite{Pr2, XZhang2} for extensions to the case with jump.

In recent years, Zvonkin's transformation has been applied  in \cite{CDR, Pr, Wa-Zh, WZ16, Zh4} to a class of degenerate SDEs/SPDEs with singular drifts.  This type degenerate stochastic systems are called  stochastic Hamiltonian systems in probability theory. Consider, for instance, the following SDE for $(X_t,Y_t)$ on $\R^{2d}$ ($d\ge 1$):
\beq\label{E01} \beg{cases} \d X_t= Y_t\d t,\\\d Y_t = b_t(X_t, Y_t)\d t +\si_t(X_t,Y_t)\d W_t,\end{cases}\end{equation}where
$W_t$ is the $d$-dimensional Brownian motion, and $$b: [0,\infty)\times \R^{2d} \to \R^d,\ \ \si: [0,\infty)\times\R^{2d}\to \R^d\otimes \R^d$$are measurable. According to \cite[Theorem 1.1]{Zh4}, if there exists a constant $K>1$ such that $$ K^{-1}|v|\le |\si_t v|\le K|v|, \ \ t\ge 0, v\in\R^d,$$ and for some constant $p>2(1+2d)$,
 $$\sup_{t\ge 0} \|\nn \si_t\|_{L^p(\R^{2d})} + \int_0^\infty \|(1-\DD_x)^{\ff 1 3} b_t\|^p_{L^p(\R^{2d})}\d t<\infty,$$ then the SDE \eqref{E01} has a unique strong solution for any initial points. By a standard truncation argument,  the existence and pathwise uniqueness   up to the life time hold under the corresponding local conditions.

In this paper, we aim to extend this result to general degenerate SDEs, in particular, for singular diffusions on generalized Heisenberg groups.
As  typical models of hypoelliptic systems, smooth SDEs on Heisenberg groups have  been intensively investigated,  see for instance \cite{BG,BBB,BB,BGM,Li,W9} and references within for the study of
  functional inequalities, gradient estimates, Harnack inequalities, and   Riesz transforms. We will use these results to establish Krylov's estimates for singular SDEs and to prove the existence and uniqueness of strong solutions using Zvonkin's transformation.

In Section 2, we present a general result (see Theorem 2.1(3)) for the existence and uniqueness of degenerate SDEs with singular drifts, and then apply this result in Section 3 to singular diffusions on generalized Heisenberg groups.

\section{General results }

 For fixed constant $T>0$, consider the following SDE on $\R^N$:
 \beq\label{E0} \d X_t = Z_t(X_t) \d t +\si_t(X_t)\d B_t,\ \ t\in [0,T],\end{equation}
 where $B_t$ is the $m$-dimensional Brownian motion with respect to a complete filtered  probability space $(\OO, \F,\{\F_t\}_{t\in [0,T]}, \P)$, and
 $$Z: [0,T]\times \R^N\to \R^N,\ \ \si: [0,T]\times \R^N\to \R^N\otimes\R^m$$ are measurable and locally bounded. We are in particular interested in the case that $m<N$ such that this SDE is degenerate.

 Throughout the paper, we assume that for any $x\in \R^N$ and $s\in [0,T)$, this SDE has a unique solution $(X_{s,t}^x)_{t\in [s,T]}$ with $X_{s,s}^x=x$; i.e. it is a continuous adapted process such that
 $$X_{s,t}^x= x +\int_s^t Z_r(X_{s,r}^x)\d r +\int_s^t \si_r(X_{s,r}^x)\d B_r,\ \ t\in [s,T].$$
 Let $(P_{s,t})_{0\le s\le t\le T}$ be the associated Markov semigroup. We have
 $$P_{s,t} f(x) = \mathbb E f(X_{s,t}^x),\ \ f\in \B_b(\R^N)\cup \B^+(\R^N),\ \ x\in \R^N, 0\le s\le t\le T,$$
 where $\B_b$ (resp. $\B^+$) denotes the set of bounded (resp. nonnegative) measurable functions.
 The infinitesimal generator of the solution is
 $$\L_s:= \ff 1 2\sum_{i,j=1}^N (\si_s\si_s^*)_{ij}\, \pp_i\pp_j + \sum_{i=1}^N (Z_s)_i\, \pp_i.$$

 Now, let $\b: [0,T]\times \R^N\to \R^m$ be measurable. We intend to find reasonable conditions on $\b$ such that the following perturbed SDE is well-posed:
  \beq\label{E1} \d \tt X_t = \{Z_t+\si_t \b_t\}(\tt X_t) \d t +\si_t(\tt X_t)\d B_t,\ \ t\in [0,T].\end{equation}

 To state the main result, we introduce two spaces $L^q_{p,loc}([0,T]\times \R^N)$ and $W^q_{p,loc}([0,T]\times \R^N)$, for $p, q\ge 1$.
A real measurable  function $f$ defined on $[0,T]\times \R^N$ is said in $L^q_p([0,T]\times \R^N)$, if
$$\|f\|_{L^q_p}:= \bigg(\int_0^T \|f_t\|_{L^p(\R^N)}^q \d t\bigg)^{\ff 1 q}<\infty.$$
Next, if $f\in L^q_p([0,T]\times \R^N)$ such that $\nn f_s$ exists in weak sense for a.e. $s\in [0,T]$ and $|\nn f|\in L^q_p([0,T]\times \R^N)$, where $\nn$ is the gradient operator on $\R^N$, we write $f\in W^q_p([0,T]\times \R^N)$. Consequently, we write
$$f\in L^q_{p,loc}([0,T]\times \R^N)$$  if $h f\in L^q_p([0,T]\times \R^N)$ for any $h\in C_0^\infty(\R^N)$, and
 $$f\in W^q_{p,loc}([0,T]\times \R^N)$$ if $h f\in W^q_p([0,T]\times \R^N)$ for any $h\in C_0^\infty(\R^N)$. Moreover, a vector-valued function is in one of these spaces if so are its components.

Finally, a real function $f$ on $  \R^N$ is called $\si_t$-differentiable, if for any $v\in \R^m$ it is differentiable along the direction $\si_tv$; i.e.
 $$\nn_{\si_t v} f (x):= \ff{\d}{\d r} f(x+ r \si_t(x)v)\Big|_{r=0} $$ exists for any $x\in \R^N$.
 A real function $f$ on $ [0,T]\times \R^N$ is called $\si$-differentiable if $f_t$ is $\si_t$-differentiable for every $t\in [0,T].$ In this case, $\nn_\si f: [0,T]\times \R^N\to \R^m$ is defined by
 $$\<(\nn_\si f)_t(x), v\>:= \nn_{\si_t v}f_t (x),\ \ v\in \R^m, t\in [0,T], x\in\R^N.$$ When $\nn f$ exists, we have $\nn_\si f= \si^*\nn f$.
Let $$\BB=\big\{f\in C_b([0,T]\times\R^N): \nn_\si f\in C_b([0,T]\times\R^N; \R^m)\big\}.$$ Then $\BB$ is a Banach space with
$$\|f\|_\BB:= \|f\|_\infty+\|\nn_\si f\|_\infty,$$ where $\|\cdot\|_\infty$ is the uniform norm.
An $\R^m$-valued function    $g$ is said in the space $\BB^m$, if its  components belong  to $\BB$. Let
$$\|g\|_{\BB^m} =\|g\|_\infty+\|\nn_\si g\|_\infty,\ \ g\in \BB^m.$$
We make the following assumptions.

  \beg{enumerate} \item[$(A_1)$] $\si_t(x)$ is locally bounded in $(t,x)\in [0,T]\times\R^N$, and for any $R>0$, there exists a constant $c>0$ such that $$|\si_t(x) \b_t(x)|\ge c |\b_t(x)|,\ \ t\in [0,T], |x|\le R.$$
  \item[$(A_2)$] For any $f\in C_0^\infty([0,T]\times \R^N)$ and $\ll\ge 0$, the function
 \beq\label{QL}(Q_\ll f)_s(x):= \int_s^T \e^{-\ll(t-s)} P_{s,t} f_t(x)\d t,\ \ s\in [0,T], x\in \R^N\end{equation} satisfies that $\pp_s (Q_\ll f)_s, \scr L_s(Q_\ll f)_s$ exist and are locally bounded on $[0,T]\times \R^N$ with
 \beq\label{IT2}(\pp_s+\scr L_s -\ll)(Q_\ll f)_s +f_s=0.\end{equation}
\end{enumerate}
Assumption $(A_1)$ holds provided $\si$ is continuous and   has rank $m$.   Assumption $(A_2)$ holds if $Z_t(x)$ and $\si_t(x)$  are regular enough in $x$, for instance,
$C^2$-smooth in $x$ uniformly in $t\in [0,T]$.

For any $p,q\ge 1$, let $\|\cdot\|_{p\to q}$ denote the operator norm from $L^p(\R^N;\d x)$ to $L^q(\R^N;\d x)$.
To introduce the integrability conditions for the drift $\b$, we need the following two classes of pairs $(p,q)\in (1,\infty]^2$:
\beg{align*}\scr K_1:=\big\{(p,q)\in   (1,\infty]^2:\ &\text{there\ exists\  }\gg\in L^{\ff q {q-1}}([0,T]) \text{\ such\ that} \\ & \ \|P_{s,t}\|_{p\to\infty}\le \gg(t-s),\ \ 0\le s< t\le T\big\},\\
 \scr K_2:=\big\{(p,q)\in  (1,\infty]^2:\ &\text{there\ exists\  }\gg\in L^{\ff q {q-1}}([0,T]) \text{\ such\ that} \\
 &\ \ \|\si_t^*\nn P_{s,t}\|_{p\to\infty}\le \gg(t-s),\ \ 0\le s< t\le T\big\}.\end{align*}
 Obviously, both $\scr K_1$ and $\scr K_2$ are increasing sets; that is, if $(p,q)\in \scr K_i$ then $(p',q')\in \scr K_i$ for $p'\ge p$ and $q'\ge q, i=1,2.$  We will also use the following class
 $$2\scr K_1:=\{(2p,2q):(p,q)\in\scr K_1\}.$$
Clearly, $2\scr K_1\subset\scr K_1$. 
When $Z=0$ and $\si\si^*=I_{N\times N}$, the $N\times N$-identity matrix,  we have $P_{s,t}=P_{t-s}$ for the standard heat semigroup $P_t$, so that
\beq\label{GRD} \|P_{s,t}\|_{p\to\infty}\le C(t-s)^{-d/(2p)},\ \ \|\si^*\nn P_{s,t}\|_{p\to\infty}\le C(t-s)^{-\ff 1 2- d/(2p)},\ \ 0\le s<t\le T\end{equation}
for some constant $C>0$, then
\beq\label{KKW}scr K_1\textcolor{red}{\supset}\Big\{(p,q)\in (1,\infty]^2:\ \ff 1 q + \ff d {2p} <1\Big\},\ \  \scr K_2 \supset\Big\{(p,q)\in (1,\infty]^2:\ \ff 2 q + \ff d p <1\Big\}.\end{equation}
These formulas  also hold for elliptic diffusions satisfying \eqref{GRD}. But for degenerate diffusions the dimension $d$ in this display will be enlarged, see for instance the proof of Theorem \ref{T3.1} below.

We are now ready to state the main result in this section. In particular, the first  assertion implies that if $\scr K_1\cap\scr K_2\ne \emptyset$,     then for any $\b\in C^\infty_0([0,T]\times\mathbb{R}^{N})$ and large enough $\ll>0$, the equation
\beq\label{IE} \u_s = \int_s^T \e^{-\ll(t-s)} P_{s,t} \big\{\nn_{\si_t \b_t} \u_t +\si_t \b_t\big\}\d t,\ \ s\in [0,T]\end{equation} has a unique solution $\u=:\Xi_\ll\b\in   \mathbb B^m$. We write $f\in C^{1,2}([0,T]\times\R^N)$ if $f$ is a function on $[0,T]\times \R^N$ such that $\pp_t f_t(x)$ and $ \nn^2 f_t(x)$ exist and continuous in $(t,x)$.

\beg{thm}\label{T2.0}  Assume $(A_1)$ and  $(A_2)$. Then the the following assertions hold.
\beg{enumerate} \item[$(1)$] For any $(p,q)\in \scr K_1\cap\scr K_2$ and $L>0$, there exists $\tt\ll>0$ such that for any $\ll\ge \tt\ll$, and  $\b: [0,T]\times\R^N\to \R^m$ with  $|\si\b|+|\b|\in L^q_p([0,T]\times \R^N)$ and $\||\si\b|\|_{L^q_p}\lor \||\b|\|_{L^q_p}\le L$,   the equation \eqref{IE}
has a unique solution $\u=:\Xi_\ll\b$ in $\BB^m$. Moreover,   there exists a decreasing function $\psi: [\tilde{\ll},\infty)\to (0,\infty)$ with $\psi(\infty):=\lim_{\lambda\to\infty}\psi(\lambda)=0$ such that for any $\b,\tt\b: [0,T]\times\R^N\to \R^m$ with  $ \||\si\b|\|_{L^q_p}, \||\si\tt \b|\|_{L^q_p},\||\b|\|_{L^q_p}, \||\tt \b|\|_{L^q_p}\le L,$
\beq\label{QN} \|\Xi_\ll \b -\Xi_\ll\tt \b\|_{\BB^m} \le \psi(\ll) \||\b-\tt\b|+|\si(\b-\tt \b)|\|_{L^q_p},\ \ \ll\ge \tt\ll.\end{equation}
\item[$(2)$]  If $|\mathbf{b}|\in L^q_p([0,T]\times \R^N)$ for some $p,q\ge 1$ with $(p,q)\in 2 \scr K_1$, then for any  $x\in \R^N$ the SDE $\eqref{E1}$ has a weak solution $(\tt X_t)_{t\in [0,T]}$ starting at $x$ with respect to a probability $\Q$ such that   $\E_\Q\e^{\ll \int_0^T|\b_t(\tt X_t)|^2\d t}<\infty$ holds for all $\ll>0$.
\item[$(3)$]  Assume further that 
 \begin{enumerate}
 \item[(i)] for large enough $\ll>0$, $\Xi_\ll f\in C^{1,2}([0,T]\times\R^N)$ holds for any $f\in C^\infty_0([0,T]\times\mathbb{R}^{N})$;  
 \item[(ii)] there exists $(p,q)\in 2\scr K_1\cap \scr K_2$ such that  $|\b|+|\nabla \sigma|\in L^q_{p,loc}([0,T]\times \R^N)$, $Z\in W^{q}_{p,loc} ([0,T]\times \R^N)$,  and  for large enough $\ll>0$  there hold
\beq\label{QN1}\nn_\si \Xi_\ll (h\b)\in W^{q}_{p,loc}  ([0,T]\times \R^N),\ \  h\in C_0^\infty(\R^N),\end{equation}
  \beq\label{QN2} \lim_{\ll\to\infty} \|h\nn\Xi_\ll (h\b)\|_\infty=0,\ \ h\in C_0^\infty(\R^N).\end{equation} 
  \end{enumerate} 
  Then for any $x\in \R^N$  the SDE $\eqref{E1}$  has a unique solution $\tt X_t$ starting at $x$  up to the life time $\zeta:= \lim_{n\to\infty} T\land \inf\{t\in [0,T]: |\tt X_t|\ge n\}$. \end{enumerate}
    \end{thm}
By \eqref{IE} and the definition of $Q_\ll$ in $(A_2)$, we have
\beq\label{QW} \Xi_\ll\b= Q_\ll\big\{\nn_{\si\b}\Xi_\ll \b +\si\b\big\}.\end{equation}
If  \eqref{KKW}  holds, Theorem \ref{T2.0} (3) ensures the strong well-posedness when $|\b|\in L_p^q$ for some $p,q\ge 1$ with $\ff d p + \ff 2 q<1$, which coincides known optimal result in the elliptic setting.  In  the elliptic case there exists much weaker sufficient conditions for  the well-posedness,  for instance, in a recent paper by  Xicheng Zhang and Guohua Zhao \cite{ZZ},  the drift is allowed to be distributions (not necessarily functionals). 

To prove Theorem \ref{T2.0}, we first investigate the Krylov estimate  and the weak existence  for \eqref{E1}.

\subsection{Krylov's estimate and weak existence}

 \beg{thm}\label{T2.1}  Assume $(A_1)$ and $(A_2)$.  Let   $p,q\ge 1$ such that $(p,q)\in \scr K_1$ and $|\mathbf{b}|^2 \in L^q_p([0,T]\times \R^N)$.
 \beg{enumerate} \item[$(1)$]  For any $(p',q')\in \scr K_1$ there exists a constant $\kk>0$ such that for any $s\in [0, T)$ and solution $(\tt X_{s,t} )_{t\in [s,T]}$ of $\eqref{E1}$ from time $s$ with $\int_s^T|\b_t(\tt X_{s,t})|^2\d t <\infty$,
 \beq\label{Q1} \E\bigg( \int_s^T |f_t(\tt X_{s,t})|\d t \bigg|\F_s\bigg)\le\kk  \|f\|_{L^{q'}_{p'}},\ \ f\in L^{q'}_{p'}([0,T]\times\R^N), s\in [0,T].\end{equation} Consequently, for any $f\in L^{q'}_{p'}([0,T]\times\R^N)$ with $(p',q')\in\scr K_1$  and any $\ll>0$, there exists a constant $c(f,\ll)\in (0,\infty)$ such that
 \beq\label{QE} \E\Big(\e^{\ll \int_s^T|f_t(\tt X_{s,t})|\d t}\Big|\F_s\Big)\le c(f,\ll),\ \ s\in [0,T].\end{equation}
 \item[$(2)$] The assertion in Theorem \ref{T2.0}(2) holds.\end{enumerate} \end{thm}

To prove this result, we need the following lemma.
\beg{lem}\label{L2.1}  Assume $(A_1)$  and  $(A_2)$.
\beg{enumerate} \item[$(1)$] For any $(p,q)\in \scr K_1$, there exists a decreasing function $\psi: [0,\infty)\to (0,\infty)$ with $\psi(\infty):=\lim_{\ll\t\infty}\psi(\ll)=0$ such that for any $\ll\ge 0$,  $(Q_\ll, C_0^\infty([0,T]\times\R^N))$ in $(A_2)$ extends uniquely to a bounded linear operator $ Q_\ll: L_p^q([0,T]\times\R^N)\to L^\infty([0,T]\times\R^N)$ with
  $$\|Q_\ll f\|_\infty \le \psi(\ll) \|f\|_{L^q_p},\ \ f\in L_p^q([0,T]\times\R^N),\ \ll\ge 0.$$
 \item[$(2)$] For any  $(p,q)\in\scr K_1\cap\scr K_2$,   $(Q_\ll, C_0^\infty([0,T]\times\R^N))$ extends to a unique bounded linear  operator $Q_\ll:  L^q_p([0,T]\times\R^N)\to \BB $  such that
$$\|Q_\ll f\|_\mathbb{B}:= \|Q_\ll f\|_\infty + \|\nn_\si Q_\ll f\|_\infty\le \psi(\ll) \|f\|_{L^q_p}, \ \ \ll\ge 0, f\in L_p^q([0,T]\times\R^N) $$ holds for some decreasing   $\psi: [0,\infty)\to (0,\infty)$ with $\psi(\infty)=0$. \end{enumerate} \end{lem}

\beg{proof} We only prove (1) since that of (2) is completely similar. For any $(p,q)\in \scr K_1$, there exists $\gg\in L^{\ff q {q-1}}([0,T])$ such that for any $\ll\ge 0$ and $f\in C_0^\infty([0,T]\times \R^N)$,
\beg{align*} \|Q_\ll f\|_\infty &  \le \sup_{s\in [0,T]} \int_s^T \e^{-\ll(t-s)}  \gg(t-s) \|f_t\|_{L^p(\R^N)} \d t \\
&\le \sup_{s\in [0,T]} \bigg(\int_s^T \big|\e^{-\ll(t-s)} \gg (t-s) \big|^{\ff q {q-1}} \d t\bigg)^{\ff{q-1}q}  \|f\|_{L^q_p},\ \ \ll\ge 0.\end{align*} So, assertion (1) holds with
$$\psi(\ll):= \bigg(\int_0^T \big|\e^{-\ll t}  \gg(t) \big|^{\ff q {q-1}} \d t\bigg)^{\ff{q-1}q}.$$  \end{proof}

We will also need the following  lemma which reduces the desired Krylov's estimate  to $f\in C_0^\infty([0,T]\times \R^N).$  It can be proved using a standard approximation argument.

\beg{lem}\label{L2.2} Let $s\in [0,T]$ and $p,q\ge 1$ For any two stopping times $\tau_1\le \tau_2$, measurable process $(\xi_t)_{t\in [s,T]}$ on $\R^N$, and
random variable  $\eta\ge 0$, if  the inequality
\beq\label{QD} \E\bigg(\int_{s\land \tau_1}^{T\land\tau_2} f_t(\xi_t)\d t \bigg|\F_s\bigg)\le \|f\|_{L^q_p} \E(\eta|\F_s)\end{equation}
holds for all nonnegative $f\in C_0^\infty([0,T]\times \R^N)$, it holds for all nonnegative $f\in L^q_p([0,T]\times \R^N).$\end{lem}

\beg{proof}[Proof of Theorem \ref{T2.1}] (1) According to the Khasminskii estimate, see
\cite[Lemma 5.3]{Zh0}, \eqref{QE} follows from \eqref{Q1}. For simplicity, we only prove for $s=0$. To prove \eqref{Q1}, we first consider  $\b=0$.
Let $(X_t)_{t\in [0,T]}$  solve \eqref{E0} and let
$$\tau_n=\inf\{t\in [0,T]:\ |X_t|\ge n\},\ \ n\ge 1.$$
 For  $0\le f\in C_0^\infty([0,T]\times \R^N)$, take $u^{(\ll)}=Q_\ll f$ for $\ll\ge 0$. By $(A_2)$ and  It\^o's formula, we obtain
 \beg{align*}&0\le \E \big(u_{T\land \tau_n}^{(\ll)}(X_{T\land\tau_n})\big|\F_s\big) \\
& = u_{0}^{(\ll)}(X_{s\land\tau_n})+ \E\bigg(\int_{0}^{T\land\tau_n} (\pp_t +\scr L_t )
u_t^{(\ll)}(X_t)\d t\bigg|\F_s\bigg) \\
&\le  (1+\ll T)\|u^{(\ll)}\|_\infty -\E\bigg(\int_{0}^{T\land\tau_n}  - f_t(X_t)\d t\bigg|\F_s\bigg).\end{align*}
Noting that $u^{(\ll)}= Q_\ll f$, combining this with Lemma  \ref{L2.1} and Lemma \ref{L2.2}, for any $(p',q')\in\scr K_1$ there exists decreasing $\psi: [0,\infty)\to (0,\infty)$ with $\psi(\infty)=0$ such that
 \beq\label{Q4} \beg{split} &\E\bigg(\int_{0}^{T\land\tau_n}  f_t(X_t)\d t\bigg|\F_0\bigg)\le \psi(\ll) \|f\|_{L^{q'}_{p'}} (1+\ll T),\  \ 0\le f\in L^{q'}_{p'}([0,T]\times\R^N).\end{split}\end{equation} Letting $n\to\infty$ we prove  \eqref{Q1}   for $\b=0$.
 In general, let $(\tt X_{t})_{t\in [0,T]}$ solve \eqref{E1}  with $\int_0^T|\b_t(\tilde{X}_{t})|^2\d t<\infty$. Define
 $$T_n=\inf\bigg\{t\in [0,T]: \int_0^t |\b_r(\tt X_{r})|^2\d r\ge n\bigg\},\ \ n\ge 0,$$ where $\inf\emptyset:=\infty$ by convention.  Let
 \beg{align*} & R_{n}= \exp\bigg[-\int_0^{T\land T_n} \big\<\b_r(\tilde{X}_{r}), \d B_r\big\>- \ff 1 2 \int_0^{T\land T_n} | \b_r(\tilde{X}_{r})|^2\d r\bigg],\\
 &\tt B_t= B_t + \int_0^{t\land T_n} \b_r(\tilde{X}_{r})\d r,\ \ t\in [s,T].\end{align*}
  Then  under the probability $R_n\P$, $(\tilde{X}_{r}, \tt B_r)_{r\in [0, T\land T_n]}$ is a weak solution to the SDE \eqref{E1} for $\b=0$. So, by the assertion for $\b=0$, there exists a constat $c>0$ such that
 \beq\label{APP3'} \E\bigg[R_{n} \bigg(\int_0^{T\land T_n} f(r, \tt X_{r})\d r\bigg)^2\bigg|\F_0\bigg] \le c \|f\|^2_{L_{p'}^{q'}(T)}.\end{equation}
 By H\"{o}lder inequality and \eqref{Q1} for $\b=0$, there exists a constant $c'>0$  such that
 \beg{align*} &\E \big(R_{n}^{-1}\big|\F_0\big)= \E \big(R_{n} \e^{2\int_0^{T\land T_n} \<  \b_r(\tt X_{r}),\d B_r\>+  \int_0^{T\land T_n} | \b_r(\tt X_{r})|^2\d r}\big|\F_0\big)\\
  &\le \ss{\E \big(R_{n} \e^{4\int_0^{T\land T_n} \<\b_r(\tt X_{r}),\d \tt B_r\>- 8 \int_0^{T\land T_n} | \b_r(\tt X_{r})|^2\d r}\big|\F_0\big)} \\
 &\qquad\times  \ss{ \E \big(R_{n} \e^{ 6 \int_0^{T\land T_n} |\b_r(\tt X_{r})|^2\d r}\big|\F_0\big)}\\
 &= \ss{ \E \big(R_{n} \e^{ 6 \int_0^{T\land T_n} |\b_r(\tt X_{r})|^2\d r}\big|\F_0\big)}\le c',\end{align*}where the last step follows from
 $|\b|^2\in L^{q/2}_{p/2}([0,T]\times \R^N)$ for some $(p,q)\in2\scr K_1$ and \eqref{QE} for $\b=0$.
 Then there exists a constant $C>0$ such that
 \beg{align*} &\bigg\{\E\bigg(\int_0^{T\land T_n} f_r(\tt X_{r})\d r\Big|\F_0\bigg)\bigg\}^2 \\
 &\le \E\bigg[R_{n} \bigg(\int_0^{T\land T_n} f_r(\tt X_{r})\d r\bigg)^2\Big|\F_0\bigg]\cdot \E \big(R_{n}^{-1}\big|\F_0\big)\le C \|f\|^2_{L_{p'}^{q'}(T)}. \end{align*}
By letting $n\to\infty$ we prove \eqref{Q1}.

 (2) Assume that $|\b|\in L^q_p([0,T]\times\R^N)$ for some $(p,q)\in  2\scr K_1$.
 Let $(X_t)_{t\in [0,T]}$ solve \eqref{E0} with $X_0=\tilde{X}_0$. We intend to show that it is a weak solution of \eqref{E1} under a weighted probability $\Q:= R\P$, where $R\ge 0$ is a probability density, and thus finish the proof. Since by \eqref{Q1} for $\b=0$ we have
\beq\label{*P} \E\int_0^T |\b_t(X_t)|^2 \d t \le \kk\||\b|^2\|_{L^{q/2}_{p/2}}=\kk\|\b\|^2_{L^{q}_{p}}<\infty \end{equation} for some constant $\kk>0$.
 Then
 $$T_n:= \inf\bigg\{s\in [0,T]: \int_0^s |\b_t|^2(X_t)\d t\ge n\bigg\}\uparrow\infty\ \text{as}\ n\uparrow\infty,$$ where we set $\inf\emptyset =\infty$ by convention.
 For any $n\ge 1$, let
  $$R_n =\exp\bigg[\int_0^{T\land T_n}\<\b_t(X_t), \d B_t\> -\ff 1 2 \int_0^{T\land T_n} |\b_t(X_t)|^2 \d t\bigg].$$
   By Girsanov's theorem,    $\{R_n\}_{n\ge 1}$ is a martingale and   $\Q_n:= R_n\P$ is a probability measure such that
  $$\tt B_t:= B_t- \int_0^{t\land T_n} \b_s(X_s)\d s,\ \ t\in [0, T]$$ is an $m$-dimensional Brownian motion.  Rewriting \eqref{E0} by
 \beq\label{Q6a}\d X_t= (Z_t+ \si_t\b_t)(X_t) \d t + \si_t (X_t)\d \tt B_t,\ \ t\in [0,T\land T_n],\end{equation} we see that $(X_t, \tt B_t)_{t\in [0,T\land T_n]}$ is a weak solution of \eqref{E1} up to time $T\land T_n$. To extend this solution to time $T$, it suffices to show that the martingale $(R_n)_{n\ge 1}$ is uniformly integrable, so that $R:= \lim_{n\to \infty} R_n$ is a probability density, and $(X_t, \tt B_t)_{t\in [0,T]}$ is a weak solution of \eqref{E1} under the probability $\Q:= R\P.$
  Therefore, it remains to prove
  \beq\label{Q5} \sup_{n\ge 1} \E [R_n\log R_n] <\infty,\ \ n\ge 1.\end{equation}
    Since $(\tt B_t)_{t\in [0,T]}$ is an $m$-dimensional Brownian motion under probability $\Q_n:= R_n\P$, by \eqref{Q6a} and Theorem \ref{T2.0}(1), for any $(p',q')\in\scr K_1$ there exists a constant $\kk>0$ such that
  $$\E_{\Q_n} \int_0^{T\land T_n} f_t(X_t)\d t \le \kk \|f\|_{L^{q'}_{p'}},\ \ 0\le f\in L^{q'}_{p'}([0,T]\times\R^N), n\ge 1.$$
  Applying this estimate to $f= |\b|^2$, we arrive at
  $$2 \E[R_n\log R_n]= \E_{\Q_n} \int_0^{T\land T_n} |\b_t|^2(X_t)\d t \le \kk \||\b|^2\|_{L^{q/2}_{p/2}},\ \ n\ge 1.$$
  This implies  \eqref{Q5} and $\E_\Q\int_0^T |\b_t|^2(X_t) \d t<\infty$. Then   the proof is finished since by \eqref{QE} for $f=|\b|^2$ in Theorem \ref{T2.1} (1), $\||\b|^2\|_{L^{q/2}_{p/2}}<\infty$ implies
  $ \E_\Q\e^{\ll \int_0^T |\b_t|^2(X_t) \d t}<\infty$ for all $\ll>0$.\end{proof}

\subsection{Proof of Theorem \ref{T2.0}}

Since Theorem \ref{T2.0}(2) follows from   Theorem \ref{T2.1}(2), we only prove Theorem \ref{T2.0}(1),(3).

\beg{proof}[Proof of Theorem \ref{T2.0}(1)] Let $\||\b|\|_{L^q_p}\le L$ for some $(p,q)\in\scr K_1\cap\scr K_2$. We first prove the existence and uniqueness of $\Xi_\ll \b$ for  large enough $\ll>0$. Consider the operator $\scr K_\ll$   on $\BB^m:$
$$\scr K_\ll \u := Q_\ll \big\{\nn_{\si\b}\u+ \si\b\big\},\  \ \u\in \BB^m.$$ By the fixed-point theorem, it suffices to show that $\scr K_\ll$ is contractive in $\BB^m$ for large enough $\ll>0$.

By Lemma \ref{L2.1}, for any $\u,\tt\u\in\BB^m$ we have
\beg{align*} &\|\scr K_\ll \u - \scr K_\ll \tt\u\|_{\BB^m}= \|Q_\ll\nn_{\si\b}(\u -\tt\u)\|_{\BB^m} \\
&\le \psi(\ll) \|\nn_\si (\u-\tt\u) \|_\infty \||\b|\|_{L^q_p}\le \psi(\ll) L\|\u-\tt\u\|_{\BB^m}.\end{align*}
Since $\psi(\ll)\to 0$ as $\ll\to\infty$, there exists $\ll_L>0$ such that $\psi(\ll_L)\le \ff 1 {2L}$. So, when $\ll\geq \ll_L,$    the map $\scr K_\ll$ is contractive in $\BB^m$. By the fixed point theorem, there exists a unique $\u\in \BB^m$ such that $\u=\scr K_\ll \u$, which is  denoted by $\Xi_\ll \b$.

Next, let $\||\b|\|_{L^q_p}, \||\tt \b|\|_{L^q_p}, \||\si\b|\|_{L^q_p}, \||\si\tt \b|\|_{L^q_p}\le L.$ By \eqref{QW},  Lemma \ref{L2.1} and $\psi(\ll)\le \ff 1 {2 L}$ for $\ll\ge \ll_L$, we have
\beg{align*} &\|\Xi_\ll \b- \Xi_\ll \tt\b\|_{\BB^m}   \le \psi(\ll) \big(\|\nn_{\si \b}\Xi_\ll \b- \nn_{\si \tt \b}\Xi_\ll \tt\b\|_{L^q_p}+\|\si(\b-\tt \b)\|_{L^q_p}\big)\\
&\le \psi(\ll) \big(\|\nn_{\si \b}(\Xi_\ll \b- \Xi_\ll\tt \b) +\nn_{\si(\b-\tt\b)} \Xi_\ll \tt\b\|_{L^q_p}   +\|\si(\b-\tt \b)\|_{L^q_p}\big)\\
& \le \psi(\ll) \big( \|\si(\b-\tt\b)\|_{L^q_p}+\|\Xi_\ll\tt \b\|_{\BB^m} \|\b-\tt\b\|_{L^q_p}\big)  +\psi(\ll) \||\b|\|_{L^q_p} \|\Xi_\ll\b-\Xi_\ll\tt\b\|_{\BB^m}\\
&\le \psi(\ll) \big( \|\si(\b-\tt\b)\|_{L^q_p}+\|\Xi_\ll\tt \b\|_{\BB^m} \|\b-\tt\b\|_{L^q_p}\big)  +\ff 1 2 \|\Xi_\ll\b-\Xi_\ll\tilde{\b}\|_{\BB^m},\ \ \ll\ge \ll_L.\end{align*}
Thus,
\beq\label{PPH} \|\Xi_\ll \b- \Xi_\ll \tt\b\|_{\BB^m}\le 2 \psi(\ll) \big( \|\si(\b-\tt\b)\|_{L^q_p}+\|\Xi_\ll\tt \b\|_{\BB^m} \|\b-\tt\b\|_{L^q_p}\big),\ \ \ll\ge \ll_L.\end{equation}
Applying this inequality to $\b=0$ we obtain $\|\Xi_\ll \tt\b\|_{\BB^m}\le 2\psi(\ll)\|\si\tt\b\|_{L^q_p}\le 1,$ so that \eqref{PPH} gives
$$\|\Xi_\ll \b- \Xi_\ll \tt\b\|_{\BB^m}\le 2 \psi(\ll)  \big(\||\b-\tt\b|\|_{L^q_p}+\|\si(\b-\tt\b)\|_{L^q_p}\big),\ \ \ll\ge \ll_L.$$
Then the   proof is finished.
\end{proof}

 To prove Theorem \ref{T2.0}(3), we consider Zvonkin's transformation
 \beq\label{Q6} \theta_t^{(\ll)}(x):= x+ (\Xi_\ll \b)_t(x),\ \ x\in\R^N, t\in [0,T] \end{equation} for large enough $\ll>0$.
 We have the following result.

 \beg{lem}\label{L2.3} Assume $(A_1)$-$(A_2)$ and Theorem \ref{T2.0}(3)(i). If $|\b|\in L^q_{p,loc}([0,T]\times\R^N)$ for some $(p,q)\in\scr K_1\cap\scr K_2$, then for large enough $\ll>0$, any solution $(\tt X_t)_{t\in [0,T]}$ of the SDE $\eqref{E1}$, any $k\geq 1$, and $h_k\in C_0^\infty(\R^N)$ such that $h_k|_{B(0,k)}=1$,
 \beq\label{E2}\d \theta^{(\ll,k)}_t(\tt X_t)= \big\{Z_t(\tt X_t)+\ll (\Xi_\ll h_k\b)_t(\tt X_t)\big\} \d t + \nn_{\si_t(\tt X_t)\d B_t}\theta_t^{(\ll,k)}(\tt X_t),\ \ t\in [0,T\wedge\tt\tau_k],\end{equation}
where
  $$\tt\tau_k:= \inf\{t\in [0,T]: |\tt X_t|\ge k\},$$
and
\begin{equation*}\begin{split}
&\theta_t^{(\ll,k)}(x):= x+ (\Xi_\ll h_k\b)_t(x),
\ \ x\in\R^N, t\in [0,T].\end{split}\end{equation*}
 \end{lem}
\beg{proof} When $\theta_t^{(\ll,k)}$ is second-order differentiable with bounded derivatives, the desired formula follows from \eqref{IT2},\eqref{QW} and  It\^o's formula.   In general, we use the following   approximation argument    as in \cite{Xi-Zh}.
Let $\{\b^{(n)}\}_{n\ge 1}\subset C_0^\infty([0,T]\times \R^N)$ such that
\beq\label{Q7} \lim_{n\to\infty}  \|h_k\b-h_k\b^{(n)}  \|_{L^q_p}=0.\end{equation}
Since $\sigma$ is locally bounded, we have
\beq\label{Q7'} \lim_{n\to\infty}  \|\sigma h_k\b-\sigma h_k\b^{(n)}  \|_{L^q_p}=0.\end{equation}
  Let $\theta^{(\ll,n,k)} $ be defined in \eqref{Q6} for $h_k\b^{(n)}$ replacing $\b$ respectively, i.e.
\beq\begin{split}\label{Q6'} &\theta_t^{(\ll,n,k)}(x):= x+ (\Xi_\ll h_k\b^{(n)})_t(x),
\ \ x\in\R^N, t\in [0,T], \ll\ge 0.\end{split}\end{equation}
  By $(A_3)$,   \eqref{QW}  and \eqref{Q6'}, we have
  \beg{align*} &(\pp_s +\scr L_s +\nn_{\si_s\b_s }) \theta_s^{(\ll,n,k)}\\
  & = Z_s+ \si_s \b_s +\nn_{\si_s\b_s}(\Xi_\ll h_k\b^{(n)})_s+\ll (\Xi_\ll h_k\b^{(n)})_s
  -\big\{\nn_{\si_s h_k\b_s^{(n)}} (\Xi_\ll h_k\b^{(n)})_s +\si_sh_k\b_s^{(n)}\big\}\\
  &= Z_s+\ll (\Xi_\ll h_k\b^{(n)})_s+\si_s (\b_s-h_k\b_s^{(n)}) + \nn_{\si_s(\b_s-h_k\b_s^{(n)})}( \Xi_\ll h_k\b^{(n)})_s.\end{align*}
  So, by \eqref{E1} and It\^o's formula, we have
  \beq\label{Q8} \beg{split} &\theta_{t\land \tt\tau_k}^{(\ll,n,k)}(\tt X_{t\land \tt\tau_k})- \theta_0^{(\ll,n,k)} (\tt X_0) \\
  &= \int_0^{t \land \tt\tau_k}\Big(Z_s+\ll (\Xi_\ll h_k\b^{(n)})_s+\si_s (\b_s-h_k\b_s^{(n)}) + \nn_{\si_s(\b_s-h_k\b_s^{(n)})}( \Xi_\ll h_k\b^{(n)})_s\Big)(\tt X_s)\d s \\
  &\qquad + \int_0^{t\land \tt\tau_k} \si_s(\tt X_s)\d B_s + \int_0^{t \land \tt\tau_k}\nn_{\si_s(\tt X_s) \d B_s} (\Xi_\ll h_k\b^{(n)})_s(\tt X_s),\ \ k\ge 1, t\in [0,T].\end{split} \end{equation}
By Theorem \ref{T2.0}(1) and \eqref{Q7}, for large enough $\ll>0$,
$$\lim_{n\to\infty} \|\theta^{(\ll,n,k)}-\theta^{(\ll,k)}\|_{\BB^m}=\lim_{n\to\infty} \|\Xi_\ll h_k\b-\Xi_\ll h_k\b^{(n)}\|_{\BB^m}=0.$$
Then
\beg{align*}&\lim_{n\to\infty} \big\{\theta_{t\land\tau_k}^{(\ll,n,k)}(\tt X_{t\land\tau_k})- \theta_0^{(\ll,n,k)} (\tt X_0)\big\}= \theta^{(\ll,k)}_{t\land\tau_k}(\tt X_{t\land \tau_k})- \theta^{(\ll,k)}_0(\tt X_0),\\
&\lim_{n\to\infty} \int_0^{T\wedge\tilde{\tau}_k}  |(\Xi_\ll h_k\b^{(n)}- \Xi_\ll h_k\b)_s|(\tt X_s)\d s =0.\end{align*}  Since $h_k|_{B(0,k)}=1$, combining these with   \eqref{Q1} and  the local boundedness of $\si$, we may find  out a constant $C>0$ such that
\beg{align*} &\lim_{n\to\infty} \E\int_0^{T\land \tt\tau_k} \Big(|\nabla_{\si_s} (\Xi_\ll h_k\b_s-\Xi_\ll h_k\b_s^{(n)})|^2\\
  &\qquad\qquad\qquad+ |\nn_{\si_s(\b_s-h_k\b_s^{(n)})}( \Xi_\ll h_k\b^{(n)})_s|+|\si_s(\b_s-h_k\b_s^{(n)})|\Big)(\tt X_s)\d s\\
&\le C \lim_{n\to\infty} \Big(\|\Xi_\ll h_k \b -\Xi_\ll h_k\b^{(n)})\|_{\mathbb{B}^m}^2+ \|h_k\b-h_k\b^{(n)}\|_{L^q_p}\Big)=0.\end{align*}
Therefore, letting $n\to\infty$ in \eqref{Q8}, we obtain
\beg{align*}&\theta_{t\land \tt\tau_k}^{(\ll,k)}(\tt X_{t\land \tt\tau_k})- \theta_0^{(\ll,k)} (\tt X_0)
   = \int_0^{t \land \tt\tau_k}\Big(Z_s+\ll (\Xi_\ll h_k \b)_s \Big)(\tt X_s)\d s \\
   &\quad +
 \int_0^{t\land \tt\tau_k} \si_s(\tt X_s)\d B_s + \int_0^{t \land \tt\tau_k}\nn_{\si_s(\tt X_s) \d B_s} (\Xi_\ll h_k\b )_s(\tt X_s),\ \ k\ge 1, t\in [0,T]. \end{align*}
 This means that \eqref{E2} holds for $t\le T\land\tt\tau_k.$
\end{proof}

By Lemma \ref{L2.3}, the uniqueness of the SDE  \eqref{E1} follows from that of \eqref{E2}.  As  in \cite{Kr-Ro, Zh0}, to prove the uniqueness of \eqref{E2}  we  will use the following result for the maximal operator: for any $N\geq 1$,
$$\M h(x):=\sup_{r>0}\frac{1}{|B(x,r)|}\int_{B(x,r)}h(y)\d y,\ \  h\in L^1_{loc}(\mathbb{R}^N), x\in \R^N,$$ where $B(x,r):=\{y: |x-y|<r\},$  see   \cite[Appendix A]{CD}.

 \begin{lem} \label{Hardy}  There exists a constant $C_N>0$ such that for any continuous and weak differentiable function $f$,
 \beq\label{HH1}
|f(x)-f(y)|\leq C_N|x-y|(\M |\nabla f|(x)+\M |\nabla f|(y)),\ \  {\rm a.e.}\ x,y\in\R^N.\end{equation}
Moreover, for any $p>1$,    there exists a constant $C_{N,p}>0$ such that
\beq\label{HH2}
\|\M f\|_{L^p}\leq C_{N,p}\|f\|_{L^p},\ \ f\in L^p(\R^N).
 \end{equation}
\end{lem}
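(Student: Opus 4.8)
The statement to prove is Lemma~\ref{Hardy}, which is a pair of classical facts about the Hardy--Littlewood maximal function: the pointwise estimate \eqref{HH1} relating increments of $f$ to the maximal function of $|\nabla f|$, and the strong $(p,p)$ bound \eqref{HH2} for $p>1$. These are standard in harmonic analysis, so the plan is simply to recall the short classical arguments rather than to invent anything new.

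\medskip

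\textbf{Approach to \eqref{HH1}.} I would first prove it in the special case $y$ equal to a center and then symmetrize. Fix $x,y\in\R^N$ with $x\neq y$, set $r=|x-y|$, and let $B=B(z,r)$ with $z=\tfrac{x+y}{2}$, so that both $x$ and $y$ lie in $B$ and $|B|\simeq_N r^N$. For a smooth (or weakly differentiable, after mollification) $f$, write $f(x)-\bar f_B=\tfrac1{|B|}\int_B (f(x)-f(w))\,\d w$ where $\bar f_B$ is the average of $f$ over $B$, and estimate $|f(x)-f(w)|\le\int_0^1|\nabla f(w+t(x-w))|\,|x-w|\,\d t$. Carrying out the $w$-integral, changing variables and using that $|x-w|\le 2r$ for $w\in B$, one bounds $|f(x)-\bar f_B|$ by $C_N\, r\,\M|\nabla f|(x)$ (this is the textbook ``potential estimate'' $|f(x)-\bar f_{B(x,\rho)}|\le C_N\rho\,\M|\nabla f|(x)$, applied with a ball containing $x$ of radius comparable to $r$). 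The same bound holds with $y$ in place of $x$, and then \eqref{HH1} follows from the triangle inequality $|f(x)-f(y)|\le|f(x)-\bar f_B|+|f(y)-\bar f_B|$. The ``a.e.'' qualifier is needed because $\nabla f$ is only defined a.e.; one handles a general continuous weakly differentiable $f$ by mollifying, applying the inequality to $f_\eps=f*\rho_\eps$ (noting $|\nabla f_\eps|\le(\M$ is monotone$)\ldots$ actually $\M|\nabla f_\eps|\le \M(|\nabla f|*\rho_\eps)\le \M\M|\nabla f|$ is not quite clean, so instead use $|\nabla f_\eps|=|(\nabla f)*\rho_\eps|\le |\nabla f|*\rho_\eps$ and $\M(|\nabla f|*\rho_\eps)(x)\le \M|\nabla f|(x)$ pointwise since convolution with a probability density is dominated by the maximal function), and passing to the limit using uniform convergence of $f_\eps\to f$ on compacts.

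\medskip

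\textbf{Approach to \eqref{HH2}.} This is the Hardy--Littlewood--Wiener maximal theorem. I would recall the two ingredients: (i) the weak-type $(1,1)$ bound $|\{\M f>\lambda\}|\le \tfrac{C_N}{\lambda}\|f\|_{L^1}$, proved via the Vitali covering lemma (cover $\{\M f>\lambda\}$ locally by balls on which the average of $|f|$ exceeds $\lambda$, extract a disjointly-packed subfamily inflating by a factor $5$); and (ii) the trivial $L^\infty\to L^\infty$ bound $\|\M f\|_\infty\le\|f\|_\infty$. The Marcinkiewicz interpolation theorem between these endpoints then gives, for every $p\in(1,\infty)$, a constant $C_{N,p}$ with $\|\M f\|_{L^p}\le C_{N,p}\|f\|_{L^p}$, which is exactly \eqref{HH2}. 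I would phrase this as a citation to a standard reference (e.g. \cite[Appendix A]{CD}, as already indicated in the text, or Stein's book) together with a one-line reminder of the interpolation argument.

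\medskip

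\textbf{Main obstacle.} There is no serious obstacle here since both statements are classical; the only care needed is bookkeeping around the ``a.e.'' and the mollification in \eqref{HH1} (ensuring that $\M|\nabla f|$ rather than something larger appears on the right), and being explicit that the constants depend only on $N$ (and on $p$ for the second part). Accordingly I would keep the proof brief, giving the potential-estimate computation for \eqref{HH1} and citing the maximal theorem for \eqref{HH2}.
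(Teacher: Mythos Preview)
Your proposal is correct and follows the standard classical route. The paper itself does not prove this lemma at all: it merely states the result and refers the reader to \cite[Appendix A]{CD}, so your sketch of the potential estimate for \eqref{HH1} and the Vitali/Marcinkiewicz argument for \eqref{HH2} is in fact more detailed than what the paper provides.
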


\begin{proof}[Proof of Theorem \ref{T2.0}(3)] By Theorem \ref{T2.0}$(2)$ and  the Yamada-Watanabe principle, it suffices to prove the pathwise uniqueness. Let
$\tt X_t, \tt Y_t$ be two solutions of \eqref{E1} with $\tt X_0= \tt Y_0$ and life times $\xi,\eta$ respectively. Let
$$\xi_n:=\inf\{t\in [0,T]: |\tt X_t|\ge n\},\ \ \eta_n:=\inf\{t\in [0,T]: |\tt Y_t|\ge n\},\ \ n\ge 1.$$
Let $T_n=\xi_n\land\eta_n.$ It remains to prove $\mathbb{P}$-a.s.
\beq\label{UN}   |\tt X_{t\land T_n}-\tt Y_{t\land T_n}|=0,\ \ n\ge 1, t\in [0,T].\end{equation}
Let $h_n\in C_0^\infty(\R^N)$ such that $h_n|_{B(0,n)}=1$. Then, up to time $ T\land T_n$, $\tt X_t$ and $ \tt Y_t$ solve the SDE \eqref{E1} for $h_n\b$ replacing $\b$.

By \eqref{QN2}, we take large enough $\ll>0$  such that
 $$\sup_{t\in[0,T]} \|h_n\nn \Xi_\ll (h_n\b)_t\|_\infty\le \ff 1 2.$$
 Simply denote $\u= \Xi_\ll(h_n\b)$ and $\theta_s(x)= x+ \u_s(x)$. Then
 \beq\label{QN3} \ff 1 2 |\theta_t(x)-\theta_t(y)|\le |x-y|\le 2 |\theta_t(x)-\theta_t(y)|,\ \ t\in [0,T], x,y\in B(0,n).\end{equation}

By Lemma \ref{L2.3} and It\^{o}'s formula,   we have
\beq\label{QN4}\beg{split}
&|\theta_{t\land T_n}(\tt X_{t\land T_n})-\theta_{t\land T_n}(\tt Y_{t\land T_n})|^2\\
&=2  \int_0^{t\land T_n} \big\< Z_s(\tt X_s)- Z_s(\tt Y_s) + \ll ( \u_s(\tt X_s)-\u_s(\tt Y_s)), \theta_s(\tt X_s)-\theta_s(\tt Y_s)\big\> \d s\\
&\quad +\int_{0}^{t\land T_n} \left\|[\nabla_{\sigma_s(\tt X_s)} \theta_s(\tt X_s)-\nabla_{\sigma_s(\tt Y_s)} \theta_s(\tt Y_s)]\right\|_{\mathrm{HS}}^2\d s\\
&\quad +2 \int_0^{t\land T_n}\big\<\nn_{\si_s(\tt X_s)(\theta_s(\tt X_s)-\theta_s(\tt Y_s))}\theta_s(\tt X_s) -  \nn_{\si_s(\tt Y_s)(\theta_s(\tt X_s)-\theta_s(\tt Y_s))}\theta_s(\tt Y_s), \d B_s\big\>\\
&= \int_0^t \bb_n(s)|\theta_s(\tt X_s)-\theta_s(\tt Y_s)|^2\d s + \int_0^t \<\alpha_n(s)|\theta_s(\tt X_s)-\theta_s(\tt Y_s)|^2,\d B_s\>,\ \ t\in [0,T],
\end{split}\end{equation} where
\beg{align*} \bb_n(s)&:= \ff{1_{\{s<T_n\}} 1_{\{\tt X_s\ne \tt Y_s\}}}{|\theta_s(\tt X_s)-\theta_s(\tt Y_s)|^2} \Big(\left\|\nabla_{\sigma_s(\tt X_s)} \theta_s(\tt X_s)-\nabla_{\sigma_s(\tt Y_s)} \theta_s(\tt Y_s)\right\|_{\mathrm{HS}}^2\\
&\qquad +2 \big\< Z_s(\tt X_s)- Z_s(\tt Y_s) + \ll ( \u_s(\tt X_s)-\u_s(\tt Y_s)),
   \theta_s(\tt X_s)-\theta_s(\tt Y_s)\big\>  \Big),\\
 \aa_n(s)&:= \ff{21_{\{s<T_n\}} 1_{\{\tt X_s\ne \tt Y_s\}}}{|\theta_s(\tt X_s)-\theta_s(\tt Y_s)|^2}\Big(\nn_{\si_s(\tt X_s)(\theta_s(\tt X_s)-\theta_s(\tt Y_s))}\theta_s(\tt X_s) -  \nn_{\si_s(\tt Y_s)(\theta_s(\tt X_s)-\theta_s(\tt Y_s))}\theta_s(\tt Y_s)\Big).\end{align*}
 Since $h_n|_{B(0,n)}=1$, $\bb_n$ and $\aa_n$ do not change if $Z, \nabla_\sigma\theta,$ and $\u$ are replaced by $h_n Z, h_n\nabla_\sigma\theta$ and $h_n \u$ respectively.   So, letting
$$\Phi_s= \|\nn ( h_n Z)_s\| + \|\nn (h_n \u)_s\| +\|\nn (h_n\nn_{\si_s}\theta_s)\|^2,$$
by Lemma \ref{Hardy} we may find a constant $C_1>0$ such that
\beq\label{QN5}|\aa_n(s)|^2+ |\bb_n(s)|\le    C_11_{\{s<T_n\}}  (\scr M\Phi_s(\tt X_s)+ \scr M \Phi_s(\tt Y_s)),\ \ s\in [0,T].\end{equation}
Applying  Theorem \ref{T2.1}(1) for $h_n\b$ replacing $\b$ and using \eqref{QN5},  we obtain
$$\E  \bigg(\int_s^T (|\aa_n(s)|^2+|\bb_n(s)|)\d s\bigg|\F_s\bigg) \le  \kk \|\scr M\Phi\|_{L^{q}_{p}},\ \ s\in [0,T] $$
for some constant $\kk>0$. Since  Lemma \ref{Hardy}  and  our conditions in Theorem \ref{T2.0}(3) imply
$$\|\scr M\Phi\|_{L^{q}_{p}}\le \kk' \|\Phi\|_{L^{q}_{p}}<\infty$$   for some constant $\kk'>0$, using the Khasminskii estimate as in \eqref{QE}
we conclude that
$$\E\exp\bigg[c \int_0^T (|\aa_n(s)|^2+|\bb_n(s)|)\d s\bigg]<\infty,\ \ c>0.$$
So, by Dol\`{e}ans-Dade's exponential formula, \eqref{QN4} implies
$$|\theta_{t\land T_n}(\tt X_{t\land T_n})-\theta_{t\land T_n}(\tt Y_{t\land T_n})|^2= |\theta_0(\tt X_0)-\theta_0(\tt Y_0)|^2 \e^{2\int_0^t \<\aa_n(s),\d B_s\>+\int_0^t\big(\bb_n(s)-2 |\aa_n(s)|^2\big)\d s},\ \ t\in [0,T].$$
Since $\tt X_0=\tt Y_0$, we have proved \eqref{UN}.

\end{proof}

\section{Singular SDEs on generalized Heisenberg groups}

\subsection{Framework and main result}

Consider the following vector fields on $\R^{m+d}$, where $m\ge 2, d\ge 1$:
\beq\label{3.1} U_i(x,y)= \sum_{k=1}^m\theta_{ki} \pp_{x_k} + \sum_{l=1}^d (A_l x)_i\pp_{y_l},\  \ 1\le i\le m,\end{equation}
where $(x,y)=(x_1,\cdots, x_m, y_1,\cdots, y_d)\in \R^{m+d}$, $\Theta:=(\theta_{ij})$ and $A_l (1\leq l\le d)$ are $m\times m$-matrices satisfying the following assumption:
\beg{enumerate} \item[$(H)$] $\alpha$ is invertible, $G_l:= A_l \alpha- \alpha^*A_l^*\ne 0 (1\le l\le d)$, and there exists $\vv\in [0,1)$ such that
$$\vv \sum_{l=1}^d a_l^2 |G_l u|^2\ge \sum_{1\le l\ne k\le d} |a_la_k \<G_lu, G_ku\>|,\ \ a\in \R^d, u\in \R^m.$$\end{enumerate}

As showing in the beginning of \cite[\S 1]{W9}, this assumption implies
\beq\label{3.2} \sum_{i,j=1}^m \bigg|\sum_{l=1}^d (G_l)_{ij} a_l\bigg|^2 \ge (1-\vv) \Big(\inf_{1\le l\le d} \|G_l\|_{HS}^2\Big) |a|^2,\ \ a\in \R^d.\end{equation} Consequently, $\{U_i, [U_i,U_j]\}_{1\le i,j\le m}$ spans the tangent space of $\R^{m+d}.$ Since $\text{div} U_i=0$, the operator
$$\L:= \ff 1 2\sum_{i=1}^m U_i^2$$ is subelliptic and symmetric in $L^2(\R^{m+d}),$ and the associated  diffusion process solves the SDE for
$(X_t,Y_t)\in \R^{m+d}$:
\beq\label{E3.1} \d (X_t,Y_t) =\sum_{i=1}^m U_i(X_t)\circ \d B_t^i = Z\d t + \si(X_t) \d B_t,\end{equation}
where $B_t:=(B_t^i)_{1\le i\le m} $ is the $m$-dimensional Brownian motion, and
$$\si(x):= (\Theta, A_1 x,\cdots, A_d x),\ \ Z:=\sum_{i=1}^m \nn_{U_i}U_i= \sum_{l=1}^d{\rm tr}(\Theta A_l)\pp_{y_l}.$$
We now consider the following SDE with a singular drift $\b: [0,T]\times \R^{m+d} \to \R^m$:
\beq\label{E3.2}   \d (\tt X_t,\tt Y_t)   = \big\{\si(\tt X_t) \b_t(\tt X_t,\tt Y_t) +Z\big\}\d t + \si(\tt X_t) \d B_t.\end{equation}

\paragraph{Remark 3.1.} Take $d=m-1, \Theta=I_{m\times m}$ and for  some constants $a_l\ne\bb_l$,
$$(A_l)_{ij}= \beg{cases} a_l, &\text{if}\ i=1, j= l+1,\\
\bb_l, &\text{if}\ i=l+1, j= 1,\\
0, &\text{otherwise}.\end{cases}$$ Then $G_l^*G_k=0$ for $l\ne k$, so that $(H)$ holds with $\vv=0.$ In particular, for $a_l=-\bb_l=\ff 1 2$, $\L$ is the Kohn-Laplacian operator on the $(2m-1)$-dimensional Heisenberg group. In general, $\R^{m+d}$ is a group under the action
\beq\label{GR} (x,y)\bullet (x',y'):= (x+x', y+y'+ \<(\Theta^\ast)^{-1} A_\cdot x, x'\>),\ \ (x,y), (x',y')\in \R^{m+d},\end{equation} and $U_i, 1\le i\le m$ are left-invariant vector fields. So, we call \eqref{E3.2} a singular SDE on the generalized Heisenberg group.

\

For two nonnegative functions $F_1,F_2$,  we write $F_1\preceq F_2$ if there exists a constant $C>0$ such that
$F_1\le CF_2$, and write $F_1\asymp F_2$ if $F_1\preceq F_2$ and $F_2\preceq F_1.$

Let $\Delta_y=\sum_{l=1}^d \p_{y_l}^2$. Then $(\Delta_y, W^{2,2}(\R^d))$ is a negative definite operator in $L^2(\R^d)$. For any $\aa>0$ and $\ll\ge 0$, we consider the operator    $(\ll-\DD_y)^\aa$   defined on domain $\D((-\DD_y)^\aa):=W^{2\aa,2}(\R^d)$. This operator  extends naturally to a measurable function $f$ on the produce space $\R^{m+d}$ such that  $f(x,\cdot)\in \D((-\DD_y)^\aa)$ for   $x\in \R^m$:
$$(\ll-\DD_y)^\aa f(x,y):= (\ll-\DD_y)^\aa f(x,\cdot)(y).$$ For any $\bb>0, p\ge 1$, let $\H_y^{\aa,p}$ be the space of measurable functions on $\R^{m+d}$ such that
$$\|f\|_{\H_y^{\bb, p}}:= \|(1-\DD_y)^{\ff \bb 2}f\|_p \asymp \|f\|_p + \|(-\DD_y)^{\ff \bb 2}f\|_p<\infty.$$ Recall that for $\bb\in (0,2)$, we have
\begin{equation}\begin{split}\label{D_y}
-(-\Delta_y) ^{\frac{\beta}{2}}f(z):=\int_{\mathbb{R}^{d}}(f(z+(0,y'))-f(z))|y'|^{-(m+\beta)}\d y',\ \ z\in \R^{m+d}.
\end{split}\end{equation}

For any $\bb>0, p,q\ge 1$,  let $\H_y^{\bb,p,q}$ be the completion of $C_0^\infty([0,T]\times\R^{m+d})$ with respect to the norm
$$\|f\|_{\H_y^{\bb,p,q}}:=\|(1-\DD_y)^{\ff\bb 2}f\|_{L_p^q}\asymp\|f\|_{L^q_p}+ \|(-\Delta_y)^{\ff\bb 2} f\|_{L^q_p}.$$

Applying Theorem \ref{T2.0} to the present model, we will prove the following result.

\beg{thm}\label{T3.1} Assume $(H)$ and let $p,q\ge 1$ satisfy
\beq\label{H*}\frac{2}{q}+\frac{m+2d}{p}<1.\end{equation}
\beg{enumerate} \item[$(1)$] If $|\b|\in L^{q}_p([0,\infty]\times\R^N)$, then for any initial value $x\in \R^{m+d}$, the SDE $\eqref{E3.2}$ has a weak solution $(X_t)_{t\in [0,T]}$ starting at $x$  with $\E\e^{\ll \int_0^T |\b_t(X_t)|^2\d t}<\infty$ for all $\ll>0$.
\item[$(2)$] If $ (h\b) \in \H_y^{\ff 1 2, p,q}$ holds for any  $h\in C_0^\infty(\R^{m+d})$,
then for any initial value $x\in \R^{m+d}$, the SDE $\eqref{E3.2}$ has a unique strong solution up to life time.\end{enumerate}  \end{thm}

\subsection{Proof of Theorem \ref{T3.1}}

To apply Theorem \ref{T2.0}, we first collect some known assertions about $\L$ and the associated Markov semigroup $P_t$.
Let $\|\cdot\|_{p\to q}$ denote the operator norm from $L^p(\R^{m+d})$ to $L^q(\R^{m+d}),$ and let $\|\cdot\|_p= \|\cdot\|_{p\to p}.$
For any $\aa>0,p\ge 1$, let $\H_\si^{\aa,p}$ be the completion of $C_0^\infty(\R^{m+d})$ with respect to the norm
$$\|f\|_{\H_\si^{\aa,p}}:= \|(1-\scr L)^{\ff \aa 2 }f\|_p \asymp \|f\|_p+ \|(-\L)^{\ff\aa 2} f\|_p.$$

It is classical that
\begin{equation}\begin{split}\label{half}
\|(-\Delta_y)^{\frac{1}{2}} f\|_{p}\asymp  \|\nabla_y f\|_{p},
\end{split}\end{equation}
and for any $\bb>0$,
\begin{equation}\begin{split}\label{eqnorm}
&\|f\|_{\mathbb{H}_y^{\beta,p}}\asymp\|f\|_{p}+\|(-\Delta_y)^{\frac{\beta-[\beta]}{2}}\nabla_y^{[\beta]} f\|_{p},
\end{split}\end{equation}
where $[\beta]:=\sup\{k\in \mathbb Z_+: k\le\bb\}$  is the integer part of $\bb$.

Moreover, by the interpolation inequality, for any $0\leq\alpha<\beta<\infty$ we have
\begin{equation}\begin{split}\label{ip0}
&\|f\|_{\mathbb{H}_y^{\alpha,p}}\preceq  \|f\|^{\frac{\beta-\alpha}{\beta}}_{p}\|f\|^{\frac{\alpha}{\beta}}_{\mathbb{H}_y^{\beta,p}}.
\end{split}\end{equation}

\beg{lem}\label{L3.1} Assume $(H)$.
\beg{enumerate} \item[$(1)$] There exists a constant $C>0$ such that
\beq\label{G01} \|P_t\|_{L^1\to L^\infty}\le C t^{-\ff{m+2 d}2},\ \ t>0.\end{equation} Moreover, for any $p>1$ there exists a constant $c_p>0$ such that
\beq\label{G1}|\nn_{\sigma} P_t f|\le \ff{c_p}{\ss t} (P_t |f|^p)^{\ff 1 p},\ \ f\in \B_b(\R^{m+d}), t>0.\end{equation}
\item[$(2)$] For any $r\ge 0, p\in (1,\infty)$,
$$\|(1-\L)^{r+\ff 1 2}f\|_p \asymp \|(1-\L)^{r}f\|_p+ \|(1-\L)^{r}\nn_\si f\|_p,\ \ f\in \H_\si^{1+2r,p},$$
and
$$\|(-\L)^{r+\ff 1 2}f\|_p \asymp \|(-\L)^{r}\nn_\si f\|_p,\ \ f\in \H_\si^{1+2r,p}.$$
\item[$(3)$] For any $r>0$ and $p\in (1,\infty),$
$$\|(1-\L)^{r}f\|_p \asymp \|f\|_p + \|(-\L)^r f\|_p,\ \ f\in\H_\si^{2r,p}.$$
\item[$(4)$] For any $r\in (0,1)$ and $p>\ff{m+2d}{2r}$,
$$\|f\|_\infty\preceq \|(1-\L)^{r}f\|_p,\ \ f\in \H_\si^{2r,p}.$$
\item[$(5)$] For any $p\in (1,\infty),\aa_1,\aa_2\ge 0, \theta\in (0,1)$,  and $f\in \H_\si^{2\aa_2,p}\cap \H_y^{2\aa_1,p},$
\beg{align*} &\|(1-\Delta_y)^{\theta \aa_1} (1-\L)^{(1-\theta)\aa_2} f\|_p\preceq \|(1-\Delta_y)^{\aa_1} f\|^\theta_p \|(1-\L)^{\aa_2}f\|^{1-\theta}_p, \\
&\|(-\Delta_y)^{\theta \aa_1} (-\L)^{(1-\theta)\aa_2} f\|_p\preceq \|(-\Delta_y)^{\aa_1} f\|^\theta_p \|(-\L)^{\aa_2}f\|^{1-\theta}_p.\end{align*}
\end{enumerate}
\end{lem}

\beg{proof} The inequalities in  (1) follow  from Lemma 2.4 and Corollary 1.2 in \cite{W9} respectively. Assertion (2) is due to \cite[Theorem 4.10]{FGB}. Since $P_t$ is contractive in $L^p(\R^{m+d})$ and
\beq\label{SA} (1-\L)^{-\aa}= c \int_0^\infty \e^{-t} t^{\aa-1} P_t \d t\end{equation} for some constant $c>0$, $(1-\L)^{-\aa}$ is bounded in $L^p(\R^{m+d})$ for all  $p\ge 1$. Combining this with the closed graph theorem  that
$$\|f\|_p+ \|(-\L)^\aa f\|_p\asymp \|f\|_p+\|(1-\L)^\alpha f\|_p,$$ we prove assertion (3).  By the first inequality in assertion (1) and using \eqref{SA}, we have
$$\|(1-\L)^{-\ff r 2} f\|_\infty \preceq \|f\|_p \int_0^\infty \e^{-t}\ff{\|P_t\|_{p\to\infty}}{t^{1-r}}  \d t  \le C \|f\|_p$$ for some constant $C>0$. So, assertion (4) holds.
Finally, let
$$\scr A= (1-\Delta_y)^{\aa_1} (1-\L)^{-\aa_2}.$$ By the interpolation theorem (see \cite[Theorem 6.10]{PA}), we have
$$\|\scr A^\theta g\|_p\preceq \|g\|^{1-\theta}_p \|\scr Ag\|^{\theta}_p.$$ Applying this inequality to $g= (1-\L)^{\aa_2}f$, we obtain
$$\|(1-\Delta_y)^{\theta \aa_1} (1-\L)^{(1-\theta)\aa_2} f\|_p= \|\scr A^\theta g\|_p \preceq \|(1-\L)^{\aa_2}\|_p^{1-\theta} \|(1-\Delta_y)^{\aa_1}\|_p^\theta.$$ \end{proof}

\beg{proof}[Proof of Theorem \ref{T3.1}]  We first estimate $\scr K_1$ and $\scr K_2$.
Let $P_{s,t}=P_{t-s}$. By \eqref{G01} and using the interpolation theorem, we have
 \beq\label{QP01} \|P_{s,t} f\|_\infty\preceq (t-s)^{-\ff{m+2d}{2p}} \|f\|_p, t>s\ge 0, p\ge 1.\end{equation}So,
 \beq\label{KK1}\scr K_1\supset \Big\{(p,q)\in (1,\infty]^2:\ \ff 1 q +\ff{m+2d}{2p}< 1\Big\}. \end{equation} Combining \eqref{QP01}   with \eqref{G1}, we see that  for any $\vv\in (0,p-1),$
 \beq\label{QP02}   \|\nn_\si P_{s,t}f\|_\infty \preceq (t-s)^{-\ff 1 2} \|P_{s,t} |f|^{1+\vv}\|_\infty^{\ff 1 {1+\vv}} \preceq (t-s)^{-\ff 1 2 - \ff{(m+2d)(1+\vv)}{2p}} \|f\|_p, \ \ t>s\geq 0.\end{equation} So,
\beq\label{KK2}\scr K_2\supset \Big\{(p,q)\in (1,\infty]^2:\ \ff 2 q +\ff{m+2d}{p}< 1\Big\}. \end{equation}
 Therefore,  the first assertion follows from Theorem \ref{T2.0}(2).

 Next, we verify   $(A_1)$, $(A_2)$ and the assumption in Theorem \ref{T2.0}(3).
  Since $\Theta$ is invertible, there exists a constant $\ll>0$ such that
$$|\si v|\ge |\Theta v|\ge \ll |v|,\ \ v\in \R^m.$$ So, $(A_1)$ holds.
Next, since $U_i$ are smooth vector fields with  constant or linear coefficients,   $\pp_t P_t f= \L P_t f$  for $f\in C_0^\infty(\R^N)$ and
  $$\|\nn P_t f\|_\infty\le C\|\nn f\|_\infty,\ \ t\in [0,T], f\in C_b^1(\R^N)$$ for some constant $C>0$.
So, $(A_2)$ and the assumption in Theorem \ref{T2.0}(3) \textcolor{red}{(i)} hold.
So, for  $(p,q)$ satisfy \eqref{H*}, by \eqref{KK1} and \eqref{KK2}  we have $(p,q)\in \scr K_1\cap\scr K_2$. According to Theorem \ref{T2.0}(3), it remains to prove that for $h\in C_0^\infty(\R^{m+d})$,
\beq\label{D1}\lim_{\ll\to\infty} \|\nn \{h\Xi_\ll (h\b)\}\|_\infty=0,\end{equation}
\beq\label{D1'}\limsup_{\ll\to\infty} \|\nn \{h\nn_\si \Xi_\ll (h\b)\}\|_{L_{p}^{q}}<\infty.\end{equation}
 We leave the proofs to  the following subsection.
\end{proof}

\subsection{Proofs of \eqref{D1} and \eqref{D1'}}

 We first  investigate the regularity of the solution to the following PDE:
\begin{align}\label{PDEf}
\pp_t u_t= (\ll-\L) u_t -f_t,\ \ u_T=0.
\end{align}
For this, we need some preparations.

The following interpolation theorem comes from \cite{FGB,S}.
\begin{lem}\label{Int} Let $p\in(1,\infty)$, $0<\alpha<\beta$ and $ f\in \H_\si^{2\aa,p}\cap \H_\sigma^{2\beta,p}$. For any $\theta\in(0,1)$, let $\gamma=\theta\alpha+(1-\theta)\beta$. Then $f\in\H_\si^{2\gamma,p}$ and
\begin{align*}
&\|(-\L)^\gamma f\|_p\leq C\|(-\L)^\alpha f\|^{\theta}_p\|(-\L)^\beta f\|^{1-\theta}_p,\\
&\|(1-\L)^\gamma f\|_p\leq C\|(1-\L)^\alpha f\|^{\theta}_p\|(1-\L)^\beta f\|^{1-\theta}_p,
\end{align*}
where $C$ only depends on $\alpha,\beta,\gamma$.
\end{lem}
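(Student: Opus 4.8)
The plan is to deduce both inequalities from the classical \emph{moment inequality} for fractional powers of a nonnegative operator, applied to the operators $-\L$ and $1-\L$ on $L^p(\R^{m+d})$. First I would record the abstract facts that make this legitimate. Being a symmetric Markov semigroup, $P_t$ is a contraction $C_0$-semigroup on $L^p(\R^{m+d})$ for every $p\in(1,\infty)$, so $\|\ll(\ll-\L)^{-1}\|_p\le 1$ for all $\ll>0$; hence $-\L$ is a densely defined nonnegative operator, while $1-\L$ is moreover invertible with $\|(1-\L)^{-1}\|_p\le 1$ (cf. \eqref{SA}). Consequently the calculus of fractional powers of nonnegative operators applies to both: one has the composition rules $(-\L)^a(-\L)^b=(-\L)^{a+b}$ and $\big((-\L)^a\big)^b=(-\L)^{ab}$ on the appropriate domains, and likewise for $1-\L$; in particular $f\in\H_\si^{2\beta,p}=\D\big((-\L)^\beta\big)$ forces $(-\L)^\alpha f\in\D\big((-\L)^{\beta-\alpha}\big)$ with $(-\L)^{\beta-\alpha}(-\L)^\alpha f=(-\L)^\beta f$. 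All of this is standard; see \cite{PA}.

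Next I would establish the core estimate: for any nonnegative operator $A$ on a Banach space, any $\delta\in(0,1)$ and any $g\in\D(A)$,
$$\|A^\delta g\|\le C_\delta\,\|g\|^{1-\delta}\,\|Ag\|^{\delta}.$$
This follows from the Balakrishnan representation $A^\delta g=\ff{\sin\pi\delta}{\pi}\int_0^\infty\ll^{\delta-1}(A+\ll)^{-1}Ag\,\d\ll$ by splitting the integral at a level $R>0$: on $(0,R)$ one bounds $\|(A+\ll)^{-1}Ag\|=\|A(A+\ll)^{-1}g\|\le(1+M)\|g\|$ with $M:=\sup_{\ll>0}\|\ll(A+\ll)^{-1}\|<\infty$, while on $(R,\infty)$ this same quantity is at most $M\ll^{-1}\|Ag\|$; integrating gives $\|A^\delta g\|\le c\big(R^\delta\|g\|+R^{\delta-1}\|Ag\|\big)$, and optimizing over $R>0$ yields the claim.

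Finally I would carry out the reduction of the two-point statement to this endpoint inequality. Put $B:=(-\L)^\beta$, $s:=\ff{\beta-\alpha}{\beta}\in(0,1]$ and $g:=(-\L)^\alpha f$; by the composition rules $B^sg=(-\L)^{\beta-\alpha}g=(-\L)^\beta f$, so applying the moment inequality to the nonnegative operator $B^s$ with exponent $\delta=1-\theta$ gives $g\in\D\big((B^s)^{1-\theta}\big)$ and
$$\big\|(B^s)^{1-\theta}g\big\|_p\le C\,\|g\|_p^{\theta}\,\|B^sg\|_p^{1-\theta}.$$
Since $(B^s)^{1-\theta}=(-\L)^{\beta s(1-\theta)}=(-\L)^{\gamma-\alpha}$ and $g=(-\L)^\alpha f\in\D\big((-\L)^{\gamma-\alpha}\big)$, the composition rule gives $f\in\D\big((-\L)^\gamma\big)=\H_\si^{2\gamma,p}$ with $(B^s)^{1-\theta}g=(-\L)^\gamma f$, so the displayed bound reads exactly $\|(-\L)^\gamma f\|_p\le C\|(-\L)^\alpha f\|_p^{\theta}\|(-\L)^\beta f\|_p^{1-\theta}$. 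Running the same argument with $1-\L$ in place of $-\L$ --- where things are even simpler, $1-\L$ being invertible --- gives the second inequality. The argument is essentially soft; the only point needing care is that $0$ lies in the spectrum of $-\L$, so one must work throughout in the class of (possibly non-invertible) nonnegative operators rather than invertible sectorial ones --- precisely the framework of \cite{PA} --- and on $L^p(\R^{m+d})$ with $1<p<\infty$ all the required properties of $-\L$ and $1-\L$ are ensured by the contractivity of the symmetric Markov semigroup $P_t$.
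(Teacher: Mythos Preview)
Your proof is correct. The paper does not actually prove this lemma; it simply attributes it to \cite{FGB,S}, where Folland and Saka build the Sobolev/Besov theory on stratified nilpotent Lie groups via harmonic analysis on the group. Your approach is purely operator-theoretic --- the abstract moment inequality for fractional powers of a nonnegative operator --- and is therefore both more elementary and more general: it uses only that $P_t$ is a contraction $C_0$-semigroup on $L^p$, nothing specific to the Heisenberg structure. In fact the paper itself invokes exactly this moment inequality (\cite[Theorem~6.10]{PA}) a few lines later, in the proof of Lemma~\ref{L3.1}(5), so your route meshes well with the surrounding toolkit. One small technical remark: the second index law $\big((-\L)^a\big)^b=(-\L)^{ab}$ that you use to reduce the three-parameter statement to the endpoint moment inequality is standard for nonnegative operators with exponents in $(0,1]$, but it is not treated in \cite{PA}; you may wish either to cite a source that does cover it, or to bypass it altogether by exploiting that $-\L$ generates a bounded analytic semigroup on $L^p$ and deriving the general moment inequality directly from the smoothing bound $\|(-\L)^r P_t\|_{p\to p}\preceq t^{-r}$.
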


Next,  let $P_t$ be the diffusion semigroup associated with the SDE \eqref{E3.1}. We estimate derivatives of $P_t$ by following the line of \cite{W9}.

\begin{lem}\label{GE0} Let $p>1$, $t>0$. Then the following assertions hold.
\begin{enumerate}
\item[$(1)$] There exists a constant $c_p>0$ such that for any $f\in\B_b(\mathbb{R}^{m+d})$,
\begin{equation}\begin{split}\label{G2}
|\nabla_y P_tf|\leq \frac{c_p}{t}(P_t |f|^p)^{\frac{1}{p}},
\end{split}\end{equation}
and
\begin{equation}\begin{split}\label{G3}
|\nabla_\sigma\nabla_\sigma P_tf|\leq\frac{c_p}{t}(P_t |f|^p)^{\frac{1}{p}}.
\end{split}\end{equation}
\item[$(2)$] For any $\alpha\in(0,1)$,
there exists a constant $C=C(p,\alpha)$
such that for all $f\in L^{p}(\mathbb{R}^{m+d})$,
\begin{equation}\begin{split}\label{GE1}
&\|\nabla_\sigma P_t f\|_{\mathbb{H}_\sigma^{\alpha,p}}+\|(-\Delta_y)^{\frac{1}{4}}P_t f\|_{\mathbb{H}_\sigma^{\alpha,p}}\leq C t^{-\frac{\alpha}{2}-\frac{1}{2}}\|f\|_{p}.
\end{split}\end{equation}
\end{enumerate}
\end{lem}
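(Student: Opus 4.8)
The plan is to obtain all the stated estimates by combining the semigroup property with the first-order gradient bound \eqref{G1} already recorded in Lemma \ref{L3.1}(1) and the commutation structure of the vector fields $U_i$. For part (1), the key observation is that $\nabla_y$ and $\nabla_\sigma\nabla_\sigma$ can both be controlled by \emph{two} applications of a first-order operator to $P_{t/2}$. Specifically, I would first split $P_t f = P_{t/2}(P_{t/2}f)$ and apply \eqref{G1} to the outer factor, giving $|\nabla_\sigma P_t f|\le \frac{c}{\sqrt t}(P_{t/2}|\nabla_{\!?}\,\cdots|^p)^{1/p}$; iterating once more handles $\nabla_\sigma\nabla_\sigma P_t f$ and produces the factor $t^{-1}$ in \eqref{G3}. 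For \eqref{G2}, since $\{U_i,[U_i,U_j]\}$ span the tangent space and in fact $\nabla_y$ lies (by \eqref{3.2} and assumption $(H)$) in the span of the brackets $[U_i,U_j]$ with bounded coefficients, one writes $\nabla_y P_t f$ in terms of $\nabla_\sigma\nabla_\sigma P_t f$ plus lower-order terms, and then invokes \eqref{G3}; the single power $t^{-1}$ (rather than $t^{-1/2}$) is exactly what the bracket structure forces, matching the parabolic scaling in which the $y$-directions have weight $2$. Throughout, the passage from $(P_{t/2}|g|^p)^{1/p}$ back to $(P_t|f|^p)^{1/p}$ uses Jensen's inequality and the semigroup property, at the cost of absorbing constants into $c_p$.

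For part (2), I would bootstrap from part (1) using the characterizations of the spaces $\mathbb H_\sigma^{\alpha,p}$ and $\mathbb H_y^{\beta,p}$ in Lemma \ref{L3.1}(2)--(3) together with the interpolation Lemma \ref{Int}. The strategy is: (a) from \eqref{G3} and Lemma \ref{L3.1}(2)(3) deduce $\|(-\L)^{1/2}\nabla_\sigma P_t f\|_p \asymp \|(-\L)\,P_t f\|_p \preceq t^{-1}\|f\|_p$, and from \eqref{G1} deduce $\|\nabla_\sigma P_t f\|_p\preceq t^{-1/2}\|f\|_p$; (b) interpolate these two endpoint bounds via Lemma \ref{Int} with $\theta=1-\alpha$ to get $\|(-\L)^{\alpha/2}\nabla_\sigma P_t f\|_p\preceq t^{-\alpha/2-1/2}\|f\|_p$, which is precisely $\|\nabla_\sigma P_t f\|_{\mathbb H_\sigma^{\alpha,p}}\preceq t^{-\alpha/2-1/2}\|f\|_p$ after adding the zeroth-order term. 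For the second piece of \eqref{GE1}, I would use \eqref{half} and \eqref{G2} to see that $(-\Delta_y)^{1/4}P_t f$ behaves like ``half of a $\nabla_y$'', hence like $t^{-1/2}$ in $L^p$ (by interpolating the $t^{-1}$ bound on $\nabla_y P_t f$ against the trivial $L^p$-contractivity of $P_t$, using Lemma \ref{L3.1}(5) or \eqref{ip0}), and then apply the same $\nabla_\sigma$-smoothing/interpolation argument one more level up to insert the extra $t^{-\alpha/2}$.

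The main obstacle I anticipate is bookkeeping the mixed scales consistently: the operator $(-\L)^{\alpha/2}$ and the operator $(-\Delta_y)^{1/4}$ measure smoothness at \emph{different} anisotropic rates (one horizontal-plus-bracket, one purely vertical), so to interpolate legitimately I must first pass everything through a common scale using Lemma \ref{L3.1}(5) (the mixed interpolation inequality) and the equivalences \eqref{eqnorm}, \eqref{half}. A secondary technical point is justifying that the pointwise bounds of part (1) survive the $L^p$-integration with the claimed powers of $t$ uniformly — this is routine given that $P_t$ is an $L^p$-contraction, but one must be careful that the constant $c_p$ in \eqref{G1} is finite for the relevant exponent $1+\varepsilon<p$, which is guaranteed by the hypothesis $p>1$. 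Once these scale-matching issues are handled, \eqref{GE1} follows from \eqref{G2}, \eqref{G3}, Lemma \ref{L3.1}(2)--(5), and Lemma \ref{Int} by a finite chain of interpolations.
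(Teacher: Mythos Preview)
Your plan for part~(2) is sound and essentially matches the paper: once \eqref{G2} and \eqref{G3} are in hand, interpolating (via Lemma~\ref{Int} and Lemma~\ref{L3.1}(5)) between the endpoint bounds $\|\nabla_\sigma P_t f\|_p \preceq t^{-1/2}\|f\|_p$, $\|(-\L)P_t f\|_p \preceq t^{-1}\|f\|_p$, and $\|(-\Delta_y)^{1/2}P_t f\|_p \preceq t^{-1}\|f\|_p$ yields \eqref{GE1} exactly as you describe.

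The gap is in your treatment of \eqref{G3}. The ``iterate \eqref{G1} twice'' step does not work as written. Applying \eqref{G1} to the outer $P_{t/2}$ produces \emph{no} derivative inside the average --- \eqref{G1} reads $|\nabla_\sigma P_s g|\le c s^{-1/2}(P_s|g|^p)^{1/p}$, not $c s^{-1/2}(P_s|\nabla_\sigma g|^p)^{1/p}$ --- and there is no commutation $U_j P_{t/2}=P_{t/2}U_j$ to push a derivative through, since $[U_j,\L]=\sum_i[U_j,U_i]U_i\neq 0$ precisely because the brackets $[U_i,U_j]$ span the vertical directions. Even if you supplement \eqref{G1} with a Bakry--\'Emery commutation estimate $|\nabla_\sigma P_s g|^{p'}\le C P_s|\nabla_\sigma g|^{p'}$, you only move one $\nabla_\sigma$ inside the expectation; there is no mechanism to extract the \emph{pointwise} second-order bound $\le c_p t^{-1}(P_t|f|^p)^{1/p}$ from first-order data alone. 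Your question mark in ``$\nabla_{?}$'' is exactly where the argument breaks.

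The paper proceeds differently. Estimate \eqref{G2} is quoted directly from the Bismut derivative formula of \cite{W9}, and \eqref{G3} is obtained by differentiating that formula once more. One writes $\nabla_{w,v}P_t f=\E[f(X_t,Y_t)M_t]$ with an explicit Malliavin weight $M_t$ built from $\QQ_t^{-1}$ and stochastic integrals, satisfying $(\E|M_t|^p)^{1/p}\preceq t^{-1}(|v|+|w|(|x|+\sqrt t))$. Then $\nabla_{w',v'}\nabla_{w,v}P_t f$ is computed via the chain rule after the split $P_t=P_{t/2}P_{t/2}$, giving two terms: one involving $(\nabla P_{t/2}f)(X_{t/2},Y_{t/2})M_{t/2}$ and one involving $P_{t/2}f(X_{t/2},Y_{t/2})\nabla_{w',v'}M_{t/2}$. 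Both are controlled by explicit moment bounds on $M$ and its derivative (the latter satisfying $(\E|\nabla_{w',v'}M_t|^p)^{1/p}\preceq t^{-1}|w||w'|$). Evaluating at the origin and invoking left-invariance under the group action \eqref{GR} gives \eqref{G3} globally. Your idea of recovering \eqref{G2} from \eqref{G3} via the bracket relation $[U_i,U_j]=\sum_l(G_l)_{ij}\partial_{y_l}$ together with \eqref{3.2} is correct in principle, but the hard content is \eqref{G3} itself, and for that a Bismut-formula (or equivalent heat-kernel second-derivative) computation is needed rather than a semigroup iteration.
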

\begin{proof}   \eqref{G2} follows from   \cite[Theorem 1.1]{W9} for $u=0$. Moreover, combining \eqref{G2} with \eqref{half} and \eqref{ip0}, we obtain
\beq\label{W*B} \|(-\DD_y)^{\ff\bb 2}P_t f\|_p\preceq \ff 1 {t^\bb} (P_t|f|^p)^{\ff 1 p},\ \ \bb\in (0,1), p>1, t>0.\end{equation}
Then we can claim that   that it suffices to prove \eqref{G3} holds. Indeed,  by \eqref{G3},   \eqref{G1} and  Lemma \ref{Int} we obtain
$$\|\nabla_\sigma P_t f\|_{\mathbb{H}_\sigma^{\alpha,p}}\preceq t^{-\frac{\alpha}{2}-\frac{1}{2}}\|f\|_{p}$$ for $\aa\in (0,1)$. On the other hand, by  Lemma \ref{L3.1}(5), \eqref{G3} and \eqref{W*B}, we have
\begin{align*}
\|(-\Delta_y)^\frac{1}{4}(-\L)^{\frac{\alpha}{2}}P_tf\|_p&=\|(-\Delta_y)^{\frac{1}{4(1-\frac{\alpha}{2})}(1-\frac{\alpha}{2})}(-\L)^\frac{\alpha}{2}P_tf\|_p\\
&\preceq \|(-\Delta_y)^{\frac{1}{4(1-\frac{\alpha}{2})}}P_tf\|^{1-\frac{\alpha}{2}}_p\|(-\L)P_tf\|^\frac{\alpha}{2}_p\\
&\preceq t^{-\frac{1}{2}}t^{-\frac{\alpha}{2}}\|f\|_p.
\end{align*} Therefore, \eqref{GE1} holds.

We now prove \eqref{G3} by  using the   derivative formula in \cite[Theorem 1.1]{W9}. Let $\QQ_t=(q_{kl}(t))_{1\leq k,l\leq d}$ with
$$q_{lk}(t):=\int_0^t\left\langle G_l^\ast G_k\left(B_s-\frac{1}{t}\int_0^t B_s\d s\right),\left(B_s-\frac{1}{t}\int_0^t B_s\d s\right)\right\rangle\d s.$$ Then $\QQ_t$ is invertible for $t>0.$ Next,
  for $x,w\in\R^m$ and $v\in \R^d$, let
\beq\label{alpha}
 (\tilde{\alpha}_{t,w,v,x})_l:=v_l-\langle \Theta^{-1}w,A_lx\rangle-\frac{1}{t}\int_0^t\langle G_l^\ast \Theta^{-1}w,B_s\rangle\d s,\ \ 1\le l\le d.\end{equation} Then for the functional $(x,y)\mapsto \tt\aa_{t,w,v,x}$ we have
 \beq\label{*WD} \nabla_{(w',v')}(\tilde{\alpha}_{t,w,v,x})_l=-\langle \Theta^{-1}w,A_lw'\rangle, \ \ (w',v')\in \R^{m+d}, 1\leq l\leq d.
\end{equation}
Next,  the solution of \eqref{E3.1} starting at $(x,y)$ is given by
\begin{align*}
 X_t=x+\alpha B_t,\ \ (Y_t)_l=y_l+\langle A_lx, B_t\rangle+\int_0^t\langle A_l\alpha B_s,\d B_s\rangle,\ \ 1\leq l\leq d.
\end{align*}
Then
\begin{align}\label{GS0}
&\nabla_{(w',v')} (X_t,(Y_t)_l)=(w',(v')_l'+\langle A_lw',B_t\rangle),\ \ 1\leq l\leq d.
\end{align}
According to   \cite[Theorem 1.1(3)]{W9}, we have the Bismut derivative formula
\begin{align}\label{Bismut}
\nabla_{w,v}P_t f=\mathbb{E}[f(X_t,Y_t)M_t],
\end{align}
 where  by the formulation of $\tt h'$ given in \cite[Theorem 1.1]{W9},
\beq\label{*WD2}\beg{split}
M_t &:=D^*\tt h =  \frac{1}{t}\langle \Theta^{-1}w,B_t\rangle+\sum_{k=1}^d (\QQ_t^{-1}\tilde{\alpha}_{t,w,v,x})_k\int_0^t \langle G_kB_s,\d B_s\rangle\\
&-\sum_{k=1}^d D_{\beta_k} (\QQ_t^{-1}\tilde{\alpha}_{t,w,v,x})_k-\sum_{k=1}^d\frac{(\QQ_t^{-1}\tilde{\alpha}_{t,w,v,x})_k}{t}\left\langle\int_0^t G_kB_s\d s,B_t\right\rangle\\
&+\sum_{k=1}^d\sum_{i=1}^m\frac{D_{h_i}(\QQ_t^{-1}\tilde{\alpha}_{t,w,v,x})_k}{t}\int_0^t (G_kB_s)_i\d s+\sum_{k=1}^d\sum_{i=1}^m\frac{t}{2}(\QQ_t^{-1}\tilde{\alpha}_{t,w,v,x})_k(G_k)_{ii},
\end{split}\end{equation}
for $h_i(s):=s e_i, \beta_k(s):=\int_0^s G_kB_r\d r, s\in[0,t]$, and $\{e_i\}_{i=1,\cdots,m}$ being the  orthonormal basis of $\mathbb{R}^m$.
According to step (1) in the proof of  \cite[Theorem 1.1]{W9}, for any $p>1$, we have
\begin{align}\label{M^p}
\{\mathbb{E}|M_t|^p\}^{1/p}\preceq\frac{(|v|+|w|(|x|+\sqrt{t}))}{t}.
\end{align}
Moreover, by \eqref{*WD2} we have
\begin{align*}
\nabla_{(w',v')}M_t&=\sum_{k=1}^d(\QQ_t^{-1}\nabla_{(w',v')}\tilde{\alpha}_{t,w,v,x})_k\int_0^t \langle G_kB_s,\d B_s\rangle\\
&-\sum_{k=1}^d D_{\beta_k} (\QQ_t^{-1}\nabla_{(w',v')}\tilde{\alpha}_{t,w,v,x})_k-\sum_{k=1}^d\frac{(\QQ_t^{-1}\nabla_{(w',v')}\tilde{\alpha}_{t,w,v,x})_k}{t}\left\langle\int_0^t G_kB_s\d s,B_t\right\rangle\\
&+\sum_{i=1}^m\sum_{k=1}^d\frac{D_{h_i}(\QQ_t^{-1}\nabla_{(w',v')}\tilde{\alpha}_{t,w,v,x})_k}{t}\int_0^t (G_kB_s)_i\d s\\
&+\sum_{k=1}^d\sum_{i=1}^m\frac{t}{2}(\QQ_t^{-1}\nabla_{(w',v')}\tilde{\alpha}_{t,w,v,x})_k(G_k)_{ii}.
\end{align*}
Combining this with \eqref{*WD} we prove
\begin{align}\label{GM^p}
\{\mathbb{E}|\nabla_{(w',v')}M_t|^p\}^{1/p}\preceq\frac{|w||w'|}{t},\ \ (w',v')\in \R^{m+d}.
\end{align}
By the Markov property and \eqref{Bismut}, we derive
\begin{align*}
\nabla_{(w,v)}P_tf=\nabla_{(w,v)}P_{\frac{t}{2}}(P_{\frac{t}{2}}f)=\mathbb{E}[(P_{\frac{t}{2}}f)(X_\frac{t}{2},Y_\frac{t}{2})M_\frac{t}{2}],
\end{align*}
and by the chain rule,
\beq\label{*WD3}\beg{split}
&\nabla_{(w',v')}\nabla_{(w,v)}P_tf=\nabla_{(w',v')}\mathbb{E}[(P_{\frac{t}{2}}f)(X_\frac{t}{2},Y_\frac{t}{2})M_\frac{t}{2}]\\
&=\mathbb{E}\left[\left(\nabla_{\nabla_{(w',v')}(X_\frac{t}{2},Y_\frac{t}{2})}P_{\frac{t}{2}}f\right)(X_\frac{t}{2},Y_\frac{t}{2}) M_\frac{t}{2}\right]
  +\mathbb{E}\left[(P_{\frac{t}{2}}f)(X_\frac{t}{2},Y_\frac{t}{2})\nabla_{(w',v')}M_\frac{t}{2}\right].
\end{split}\end{equation}
By \eqref{GS0}, \eqref{M^p} and using  H\"{o}lder's inequality, we obtain
\begin{align*}
&\mathbb{E}\left|\left(\nabla_{\nabla_{(w',v')}(X_\frac{t}{2},Y_\frac{t}{2})}P_{\frac{t}{2}}f\right)(X_\frac{t}{2},Y_\frac{t}{2})M_\frac{t}{2}\right|\\
&\preceq(P_t|f|^p)^{1/p}\frac{(|v'|w'(|x|+\sqrt{t}))(|v|+w(|x|+\sqrt{t}))}{t^2},
\end{align*} while by \eqref{GM^p} and
  H\"{o}lder's inequality,
\begin{align*}
\mathbb{E}\left|(P_{\frac{t}{2}}f)(X_\frac{t}{2},Y_\frac{t}{2})\nabla_{(w',v')}M_\frac{t}{2}\right|\preceq(P_t|f|^p)^{1/p}\frac{|u||u'|}{t}.
\end{align*}
Therefore, it follows from \eqref{*WD} that
\beq\label{g2p}\beg{split}
& |\nabla_{(w',v')}\nabla_{(w,v)}P_tf|(x,y)\\
 &\preceq(P_t|f|^p)^{1/p}\left(\frac{(|w'|+v'(|x|+\sqrt{t}))(|v|+w(|x|+\sqrt{t}))}{t^2}+\frac{|w|\cdot|v'|}{t}\right).
\end{split}\end{equation}
Finally, by the definition of $U_i$,  we have
\begin{align*}
U_iU_j&=\left(\sum_{k=1}^m\theta_{ki} \pp_{x_k} + \sum_{l=1}^d (A_l x)_i\pp_{y_l}\right)\left(\sum_{k=1}^m\theta_{kj} \pp_{x_k} + \sum_{l=1}^d (A_l x)_j\pp_{y_l}\right)\\
&=\sum_{k=1}^m\sum_{l=1}^m\theta_{ki}\theta_{lj} \pp_{x_k} \pp_{x_l}+\sum_{k=1}^m\sum_{l=1}^d\theta_{ki} (A_l)_{jk}\pp_{y_l}+\sum_{k=1}^m\sum_{l=1}^d\theta_{ki}(A_l x)_j\pp_{x_k}\pp_{y_l}\\
&+\sum_{l=1}^d \sum_{k=1}^m(A_l x)_i\theta_{kj}\pp_{y_l} \pp_{x_k}+\sum_{l=1}^d\sum_{k=1}^d (A_l x)_i(A_k x)_j\pp_{y_l}\pp_{y_k}.
\end{align*}
Combining this with \eqref{G2} and \eqref{g2p} with $(x,y)=(0,0)$, we arrive at
\begin{align*}
&|U_i U_jP_tf(0,0)|\preceq\frac{1}{t}(P_t|f|^p(0,0))^{1/p},\ \ 1\leq i,j\leq m, p>1, t>0.
\end{align*}
As explained in the proof of \cite[Proof of Corollary 1.2]{W9}, by the left-invariant property of $U_i$ and $\partial_{y_l}$ under the group action in \eqref{GR}, this is equivalent to  \eqref{G3}.
\end{proof}

The next lemma due to \cite[Theorem 5.15]{FGB} generalizes the classical Sobolev embedding theorem.
\begin{lem} \label{SET} Suppose $p\in(1,\infty)$ and $\alpha>\frac{m+2d}{p}$, then
\begin{equation}\label{fgb}
\|f\|_{\Gamma_\gamma}\leq C(p,m+2d,\alpha)\|f\|_{\mathbb{H}_\sigma^{\alpha,p}}, \ \ \gamma\in[0,\alpha-(m+2d)/p],
\end{equation}
where
\begin{equation*}
\|f\|_{\Gamma_\gamma}:=\|f\|_\infty+|f|_\gamma, \ \ |f|_\gamma:=\sup_{x\in\mathbb{R}^{d+m},y\neq0}\frac{|f(x\bullet y)-f(x)|}{|y|^\gamma}.
\end{equation*}
\end{lem}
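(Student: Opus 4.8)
\noindent
\textbf{Proof proposal for Lemma \ref{SET}.}
This is the analogue, for the step--two Carnot group defined by \eqref{GR}, of the classical embedding $W^{\aa,p}(\R^n)\hookrightarrow C^{0,\gamma}$, with the homogeneous dimension $Q:=m+2d$ (the $x$--variables carrying degree one and the $y$--variables degree two under the dilations $\delta_r(x,y)=(rx,r^2y)$) playing the role of $n$. I would first reduce it to pointwise bounds on a convolution kernel. Since the $U_i$ are left invariant and homogeneous of degree one, $\L=\ff12\sum_iU_i^2$ generates a left--convolution semigroup $P_tf=f*h_t$ whose kernel scales as $h_t=t^{-Q/2}\,h_1\circ\delta_{1/\ss t}$. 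For $g:=(1-\L)^{\aa/2}f\in L^p$ (so that $\|f\|_{\mathbb H_\si^{\aa,p}}=\|g\|_p$), the subordination formula (cf.\ \eqref{SA} with $\aa$ replaced by $\aa/2$) gives
\[ f=(1-\L)^{-\aa/2}g=G_\aa*g,\qquad G_\aa:=c_\aa\int_0^\infty\e^{-t}t^{\frac\aa2-1}h_t\,\d t, \]
and everything reduces to showing, for the homogeneous norm $|\cdot|$ (equivalent to the Carnot--Carath\'eodory distance to the identity),
\[ |G_\aa(z)|\preceq |z|^{\aa-Q}\wedge\e^{-c|z|},\qquad |\nn_\si G_\aa(z)|\preceq|z|^{\aa-1-Q}\wedge\e^{-c|z|}. \]

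\noindent
These follow by inserting into the integral defining $G_\aa$ the Gaussian heat--kernel bound $h_t(z)\preceq t^{-Q/2}\e^{-|z|^2/(ct)}$ and its horizontal--gradient version $|\nn_\si h_t(z)|\preceq t^{-(Q+1)/2}\e^{-|z|^2/(ct)}$, and evaluating the resulting elementary $t$--integrals (which produce the stated power of $|z|$ for $|z|\lesssim 1$ and exponential decay for $|z|$ large). The two heat--kernel bounds are classical for sub--Laplacians on Carnot groups; on the on--diagonal side they are precisely $\|P_t\|_{L^1\to L^\infty}\preceq t^{-Q/2}$ and, through the convolution structure and the scaling, the estimate $|\nn_\si P_tf|\le c_pt^{-1/2}(P_t|f|^p)^{1/p}$, both from Lemma \ref{L3.1}(1); the higher--order derivative bounds \eqref{G2} and \eqref{G3} of Lemma \ref{GE0} supply the corresponding estimates for $\nn_y G_\aa$ and $\nn_\si\nn_\si G_\aa$ needed when $\aa-Q/p\ge1$.

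\noindent
With the kernel bounds in hand the argument is of Morrey type on the group. For the sup norm, Young's inequality for convolution gives $\|f\|_\infty=\|G_\aa*g\|_\infty\le\|G_\aa\|_{p'}\|g\|_p$ with $p'=p/(p-1)$, and $\|G_\aa\|_{p'}<\infty$ exactly because $\int_{|z|\le1}|z|^{(\aa-Q)p'}\d z<\infty\iff(\aa-Q)p'+Q>0\iff\aa>Q/p=(m+2d)/p$, the exponential factor handling the tail. For the H\"older seminorm, fix $x,y$, put $r:=|y|$, and write $f(x\bullet y)-f(x)=\int[G_\aa(z^{-1}\bullet x\bullet y)-G_\aa(z^{-1}\bullet x)]g(z)\,\d z$; after the measure--preserving substitution $w=z^{-1}\bullet x$, split the domain into $\{|w|\le2r\}$ and $\{|w|>2r\}$. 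On the first set bound $|G_\aa(w\bullet y)|$ and $|G_\aa(w)|$ separately by the size estimate and use H\"older: the $L^{p'}$--mass of $|w|^{\aa-Q}$ over a ball of radius $\asymp r$ is $\asymp r^{\aa-Q/p}$. On the second set join $w$ to $w\bullet y$ by a horizontal curve of length $\preceq r$ and apply the fundamental theorem of calculus with $|\nn_\si G_\aa|\preceq|w|^{\aa-1-Q}$ (valid since along the curve the argument stays comparable to $w$ in homogeneous norm), so that $|G_\aa(w\bullet y)-G_\aa(w)|\preceq r|w|^{\aa-1-Q}$; H\"older together with $\big(\int_{|w|>2r}|w|^{(\aa-1-Q)p'}\d w\big)^{1/p'}\asymp r^{\aa-1-Q/p}$ (the exponential decay of $\nn_\si G_\aa$ ensuring convergence at infinity) again gives $r^{\aa-Q/p}$. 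Hence $|f|_\gamma\preceq\|g\|_p$ for $\gamma=\aa-Q/p$, and for smaller $\gamma$ one interpolates with the sup bound; when $\aa-Q/p\ge1$ one first factors $G_\aa=G_k*G_{\aa-k}$ with $k=\lfloor\aa-Q/p\rfloor$, reducing to the case just treated for horizontal derivatives of order $\le k$. The main obstacle is precisely the fine, Gaussian--type heat--kernel estimates in this genuinely degenerate setting --- especially the first--order bound $|\nn_\si h_t(z)|\preceq t^{-(Q+1)/2}\e^{-|z|^2/(ct)}$ --- together with the bookkeeping of exponents through the subordination integral and the near/far splitting that makes the threshold $\aa>Q/p$ and the H\"older exponent $\aa-Q/p$ come out sharp; once the semigroup estimates of Lemmas \ref{L3.1} and \ref{GE0} are available (which is why they are proved first) the remainder is routine, and the statement is exactly \cite[Theorem 5.15]{FGB}, to which we appeal.
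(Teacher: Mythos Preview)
The paper does not prove this lemma at all: it is simply quoted as \cite[Theorem 5.15]{FGB}, with the sentence ``The next lemma due to \cite[Theorem 5.15]{FGB} generalizes the classical Sobolev embedding theorem'' and no proof environment. Your final clause --- ``the statement is exactly \cite[Theorem 5.15]{FGB}, to which we appeal'' --- is therefore already the entire content of what the paper does, and the preceding sketch, while a reasonable outline of how such embeddings are proved on stratified groups (subordination kernel bounds plus a Morrey near/far splitting), is extra material that the paper neither supplies nor needs.
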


Finally, we introduce the following lemma.

\begin{lem} \label{b-gra} Let $p>m+2d$. For any $\beta\in(0,1]$ and $\alpha\in(\frac{m+2d}{p},1]$, there exists a constant $C =C(\alpha,\beta,m+2d,p)>0$ such that for  $\R^m$-valued function $\b\in \mathbb{H}_y^{\beta,p}$ and real function $u\in L^p(\mathbb{R}^{m+d})$ with $(-\L)^{\frac{1}{2}+\frac{\alpha}{2}}u\in\mathbb{H}_y^{\beta,p}$,
\begin{equation*}\begin{split}
\|  \nabla_{\sigma \b}u \|_{\mathbb{H}_y^{\beta,p}}\preceq\|\b\|_{\mathbb{H}_y^{\beta,p}}\left(\|(-\Delta_y)^{\frac{\beta}{2}}\nabla_\sigma u\|_{\mathbb{H}_\sigma^{\alpha,p}}+\left\|\nabla_\sigma u\right\|_{\mathbb{H}_\sigma^{\alpha,p}}\right).
\end{split}\end{equation*}
\end{lem}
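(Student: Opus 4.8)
The plan is to write $\nabla_{\sigma\b}u=\langle\nabla_\sigma u,\b\rangle=\sum_{i=1}^m\b_i\,(\nabla_\sigma u)_i$ and, for each $i$, to bound the two ingredients of $\|\cdot\|_{\mathbb{H}_y^{\beta,p}}\asymp\|\cdot\|_p+\|(-\Delta_y)^{\beta/2}\cdot\|_p$ of the product $\b_i(\nabla_\sigma u)_i$, converting every $L^\infty$--factor into an $\mathbb{H}_\sigma^{\alpha,p}$--norm by the subelliptic Sobolev embedding. First I would record the consequences of Lemma \ref{SET}: since $\alpha>\frac{m+2d}{p}$ one has $\mathbb{H}_\sigma^{\alpha,p}\hookrightarrow\Gamma_\gamma$ with $\gamma:=\alpha-\frac{m+2d}{p}>0$, so, applied to $(\nabla_\sigma u)_i$ and to $(-\Delta_y)^{\beta/2}(\nabla_\sigma u)_i$ (the latter lies in $\mathbb{H}_\sigma^{\alpha,p}$ because $\Delta_y$ commutes with every $U_i$, hence with $\L$ and $\nabla_\sigma$, so $(-\Delta_y)^{\beta/2}\nabla_\sigma u=\nabla_\sigma(-\Delta_y)^{\beta/2}u$, whose $\mathbb{H}_\sigma^{\alpha,p}$--norm is finite under the hypothesis on $u$ by Lemma \ref{L3.1}(2),(3)), one gets
$$\|(\nabla_\sigma u)_i\|_\infty\preceq\|\nabla_\sigma u\|_{\mathbb{H}_\sigma^{\alpha,p}},\qquad\|(-\Delta_y)^{\beta/2}(\nabla_\sigma u)_i\|_\infty\preceq\|(-\Delta_y)^{\beta/2}\nabla_\sigma u\|_{\mathbb{H}_\sigma^{\alpha,p}};$$
moreover, since in the product \eqref{GR} a pure vertical shift $(0,y')$ acts as an ordinary $y$--translation of group size $\asymp|y'|^{1/2}$, each $(\nabla_\sigma u)_i$ is H\"older continuous in $y$ of some positive exponent (namely $\gamma/2$) with norm $\preceq\|\nabla_\sigma u\|_{\mathbb{H}_\sigma^{\alpha,p}}$. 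The $L^p$ part is then immediate by H\"older's inequality: $\|\b_i(\nabla_\sigma u)_i\|_p\le\|\b_i\|_p\|(\nabla_\sigma u)_i\|_\infty\preceq\|\b\|_{\mathbb{H}_y^{\beta,p}}\|\nabla_\sigma u\|_{\mathbb{H}_\sigma^{\alpha,p}}$.

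For the $(-\Delta_y)^{\beta/2}$ part, the endpoint $\beta=1$ is disposed of at once: by \eqref{half} it suffices to bound $\|\nabla_y(\b_i(\nabla_\sigma u)_i)\|_p$, and the ordinary Leibniz rule $\nabla_y(\b_i(\nabla_\sigma u)_i)=(\nabla_y\b_i)(\nabla_\sigma u)_i+\b_i\nabla_y(\nabla_\sigma u)_i$ together with \eqref{half}, \eqref{eqnorm} and the two estimates above finishes it. Now let $\beta\in(0,1)$. Using the pointwise representation \eqref{D_y} and the three--term identity $\delta_{y'}(fg)=\delta_{y'}f\,\delta_{y'}g+f\,\delta_{y'}g+g\,\delta_{y'}f$, where $\delta_{y'}h(z):=h(z+(0,y'))-h(z)$, one obtains
$$(-\Delta_y)^{\beta/2}\big(\b_i(\nabla_\sigma u)_i\big)=-\Pi_\beta\big(\b_i,(\nabla_\sigma u)_i\big)+\b_i\,(-\Delta_y)^{\beta/2}(\nabla_\sigma u)_i+(\nabla_\sigma u)_i\,(-\Delta_y)^{\beta/2}\b_i,$$
with $\Pi_\beta(f,g)(z):=\int_{\R^d}\delta_{y'}f(z)\,\delta_{y'}g(z)\,|y'|^{-(m+\beta)}\,\d y'$. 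The last two summands have $L^p$--norm at most $\|\b_i\|_p\|(-\Delta_y)^{\beta/2}(\nabla_\sigma u)_i\|_\infty$ and $\|(\nabla_\sigma u)_i\|_\infty\|(-\Delta_y)^{\beta/2}\b_i\|_p$, which by the first paragraph are both $\preceq\|\b\|_{\mathbb{H}_y^{\beta,p}}\big(\|(-\Delta_y)^{\beta/2}\nabla_\sigma u\|_{\mathbb{H}_\sigma^{\alpha,p}}+\|\nabla_\sigma u\|_{\mathbb{H}_\sigma^{\alpha,p}}\big)$.

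The diagonal term $\Pi_\beta$ is the only delicate point, and this is where I expect the work to concentrate. I would split the fractional order $\beta=s_1+s_2$ with $s_2\in(0,\min\{\gamma/2,\beta\})$, so that $s_1,s_2\in(0,1)$, factor $|y'|^{-(m+\beta)}=|y'|^{-(m/2+s_1)}|y'|^{-(m/2+s_2)}$, and apply the Cauchy--Schwarz inequality in $y'$ to obtain $|\Pi_\beta(f,g)(z)|\le\mathcal S_{s_1}f(z)\,\mathcal S_{s_2}g(z)$, where $\mathcal S_s h(z)^2:=\int_{\R^d}|\delta_{y'}h(z)|^2|y'|^{-(m+2s)}\,\d y'$. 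Hence $\|\Pi_\beta(\b_i,(\nabla_\sigma u)_i)\|_p\le\|\mathcal S_{s_1}\b_i\|_p\,\|\mathcal S_{s_2}(\nabla_\sigma u)_i\|_\infty$. The first factor is $\asymp\|(-\Delta_y)^{s_1/2}\b_i\|_p$ by the classical square--function characterisation of the directional Bessel potential space $\mathbb{H}_y^{s_1,p}$ (applied slice-wise in the $x$--variable and then integrated), and $\|(-\Delta_y)^{s_1/2}\b_i\|_p\preceq\|\b_i\|_{\mathbb{H}_y^{\beta,p}}$ because $|\xi|^{s_1}(1+|\xi|^2)^{-\beta/2}$ is a bounded $L^p$--Fourier multiplier in $y$ ($s_1<\beta$); the second factor is finite, being $\preceq\|(\nabla_\sigma u)_i\|_{C^{\gamma/2}_y}+\|(\nabla_\sigma u)_i\|_\infty\preceq\|\nabla_\sigma u\|_{\mathbb{H}_\sigma^{\alpha,p}}$ since $s_2<\gamma/2$. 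Summing over $i$ and collecting all bounds yields the asserted inequality (the right--hand side being finite under the stated hypotheses on $\b$ and $u$). The main obstacle, as indicated, is precisely this diagonal contribution: one has to split the fractional order so that the part carried by $\b$ stays an $L^p$ quantity --- no $L^\infty$ bound on $\b$ is available, since $\beta$ may be smaller than $d/p$ --- while the part carried by $\nabla_\sigma u$ is absorbed by the H\"older--in--$y$ regularity furnished by Lemma \ref{SET}.
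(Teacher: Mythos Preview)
Your proposal is correct and follows the same overall architecture as the paper's proof: write $\nabla_{\sigma\b}u=\langle\nabla_\sigma u,\b\rangle$, bound the $L^p$-part by H\"older plus the embedding $\mathbb{H}_\sigma^{\alpha,p}\hookrightarrow L^\infty$ from Lemma~\ref{SET}, and for the $(-\Delta_y)^{\beta/2}$-part use the three-term Leibniz identity coming from \eqref{D_y}, disposing of the two ``product'' terms exactly as you do. The only divergence is in the diagonal piece $\Pi_\beta$. The paper does not split $\beta=s_1+s_2$ and does not invoke the square-function characterisation of $\dot H^{s_1,p}_y$; instead it applies Minkowski's inequality to pull the $L^p$-norm inside the $y'$-integral, then bounds $\|\delta_{y'}(\nabla_\sigma u)\cdot\delta_{y'}\b\|_p\le\|\delta_{y'}(\nabla_\sigma u)\|_\infty\|\delta_{y'}\b\|_p$ pointwise in $y'$, controlling the first factor by $\|\nabla_\sigma u\|_{\mathbb{H}_\sigma^{\alpha,p}}(|y'|^{\gamma}\wedge 1)$ via Lemma~\ref{SET} (exactly the H\"older-in-$y$ regularity you isolate) and the second by the shift estimate $\|\delta_{y'}\b\|_p\preceq\|\b\|_{\mathbb{H}_y^{\beta,p}}(|y'|^\beta\wedge 1)$ taken from \cite[(2.5)]{Zh4}; the resulting $y'$-integral $\int(|y'|^{\gamma+\beta}\wedge 1)|y'|^{-d-\beta}\,\d y'$ is finite. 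This route is a little shorter and more elementary, as it replaces your Cauchy--Schwarz splitting and the Triebel--Lizorkin-type square-function equivalence by a single, directly citable $L^p$ translation bound. Your version, on the other hand, is self-contained (no external reference for the shift estimate) and makes the role of the two regularity budgets---$\beta$ on $\b$ and the subelliptic H\"older margin on $\nabla_\sigma u$---more transparent through the explicit splitting $s_1+s_2=\beta$.
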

\begin{proof} By the definition of $\|\cdot\|_{\mathbb{H}_y^{\beta,p}}$ and noting that $\nn_{\si \b}u=\<\nn_\si u, \b\>$, we have
\begin{equation}\begin{split}\label{gbsbet}
\|\nabla_{\sigma\b} u \|_{\mathbb{H}_y^{\beta,p}}&\preceq \|\langle \nabla_\sigma u,\b\rangle\|_{p}+\|(-\Delta_y)^{\frac{\beta}{2}}\langle \nabla_\sigma u,\b\rangle\|_{p}\\
&\preceq \|\nabla_\sigma u\|_{\infty}\|\b\|_{p}+\|(-\Delta_y)^{\frac{\beta}{2}}\langle \nabla_\sigma u,\b\rangle\|_{p}.
\end{split}\end{equation}
According to  \cite[(2.5)]{Zh4},
\begin{align*}
&\int_{\mathbb{R}^{d}}|f(x,y+y')-f(x,y)|^p\d y\leq \|(1-\Delta_y)^{\frac{\beta}{2}}f(x,\cdot)\|^p_p(|y'|^{p\beta}\wedge1),\ \ f\in \H_y^{\bb,p}.
\end{align*}
Then
\begin{align*}
\|(f(\cdot+(0,y'))-f(\cdot))\|^p_p&=\int_{\mathbb{R}^{m+d}}|f(x,y+y')-f(x,y)|^p\d x\d y\\
&\leq \|f\|_{\mathbb{H}_y^{\beta,p}}^p(|y'|^{p\beta}\wedge 1).
\end{align*}
Combining this with Lemma \ref{SET}, for any $\gamma\in(0,\alpha-\frac{m+2d}{p})$, we obtain
\begin{align*}
&\left\|\left\langle (\nabla_\sigma u)(\cdot+(0,y'))-(\nabla_\sigma u)(\cdot),\b(\cdot+(0,y'))-\b(\cdot)\right\rangle\right\|^p_p\\
=&\int_{\mathbb{R}^{m+d}}\left|\left\langle (\nabla_\sigma u)(x,y+y')-(\nabla_\sigma u)(x,y),\b(x,y+y')-\b(x,y)\right\rangle\right|^p\d x \d y\\
\leq &\int_{\mathbb{R}^{m}}\left(\sup_{y\in\mathbb{R}^d}\left|(\nabla_\sigma u)(x,y+y')-(\nabla_\sigma u)(x,y)\right|^p
\int_{\mathbb{R}^{d}}\left|\b(x,y+y')-\b(x,y)\right|^p \d y\right)\d x\\
\preceq& \left\|\nabla_\sigma u\right\|_{\mathbb{H}_\sigma^{\alpha,p}}^p(|y'|^{\gamma p}\wedge 1)\int_{\mathbb{R}^{m+d}} \left|\b(x,y+y')-\b(x,y)\right|^p \d x\d y\\
\preceq& \left\|\nabla_\sigma u\right\|_{\mathbb{H}_\sigma^{\alpha,p}}^p(|y'|^{\gamma p}\wedge 1)\|\b\|^p_{\mathbb{H}_y^{\beta,p}}(|y'|^{p\beta}\wedge 1).
\end{align*}
By \eqref{D_y}, Minkovskii inequality and Lemma \ref{SET}, we have
\begin{align*}
&\|(-\Delta_y)^{\frac{\beta}{2}}\langle \nabla_\sigma u,\b\rangle\|_{p}\\
&\preceq \|\langle (-\Delta_y)^{\frac{\beta}{2}}\nabla_\sigma u,\b\rangle\|_{p}+\|\langle \nabla_\sigma u,(-\Delta_y)^{\frac{\beta}{2}}\b\rangle\|_{p}\\
&+\int_{\mathbb{R}^d}\left\|\left\langle (\nabla_\sigma u)(\cdot+(0,y'))-(\nabla_\sigma u)(\cdot),\b(\cdot+(0,y'))-\b(\cdot)\right\rangle\right\|_p|y'|^{-\beta-d}\d y'\\
&\preceq\|\b\|_{p}\|(-\Delta_y)^{\frac{\beta}{2}}\nabla_\sigma u\|_\infty+\|(-\Delta_y)^{\frac{\beta}{2}}\b\|_{p}\|\nabla_\sigma u\|_\infty+\left\|\nabla_\sigma u\right\|_{\mathbb{H}_\sigma^{\alpha,p}}\|\b\|_{\mathbb{H}_y^{\beta,p}}\\
&\preceq\|\b\|_{p}\|(-\Delta_y)^{\frac{\beta}{2}}\nabla_\sigma u\|_{\mathbb{H}_\sigma^{\alpha,p}}+\|\b\|_{\mathbb{H}_y^{\beta,p}}\left\|\nabla_\sigma u\right\|_{\mathbb{H}_\sigma^{\alpha,p}}.
\end{align*}
Substituting this into \eqref{gbsbet}, we finish the proof.
\end{proof}

It is now ready to prove  the  following regularity estimates for solutions of \eqref{PDEf}.
\begin{thm} \label{T-PDE} Let $p,q\ge 1$ satisfy
\beq\label{3.7'} \ff 2 q+ \ff{m+2d}p <1.\end{equation} For any $f\in C_0^\infty([0,T]\times \R^{m+d})$ and $\ll\ge 0$, \eqref{PDEf} has a unique solution $u^\lambda=Q_\lambda f$, where $Q_\ll f$ is in $\eqref{QL}$ for $P_{s,t}=P_{t-s}.$ Moreover:
\beg{enumerate}
\item[$(1)$] There exists a constant $C>0$ such that
\beq\label{gra2-L0}\beg{split}
&\|\nabla_{\sigma}\nabla_{\sigma}u^\lambda\|_{L_p^q}+\|\nabla_y u^\lambda\|_{L_p^q}+\|(-\Delta_y)^{\frac{1}{4}}\nabla_\sigma u^\lambda\|_{L_p^q}\\
&\leq C\|f\|_{L_p^q},\ f\in C_0^\infty([0,T]\times \R^{m+d}).\end{split}
\end{equation}
For any $\alpha\in(0,1)$ with $\alpha<1-\frac{2}{q}$,
 \begin{equation}\label{u2}\begin{split}
&\|(-\Delta_y)^{\frac{1}{4}} u_t^\lambda\|_{\mathbb{H}_\sigma^{\alpha,p}}+\|\nabla _\sigma u_t^{\lambda}\|_{\mathbb{H}_\sigma^{\alpha,p}}\\
&\leq \phi(\lambda)\|f\|_{L^q_p},\ \ t\in[0,T], f\in C_0^\infty([0,T]\times \R^{m+d})
\end{split}\end{equation}
holds for some decreasing function $\phi:(0,\infty)\to(0,\infty)$ with $\lim_{\lambda\to\infty}\phi(\lambda)=0$.
\item[$(2)$] There exists a constant $C>0$ such that
\begin{equation}\begin{split}\label{u1'}
\|\nabla_y\nabla_\sigma u^{\lambda}\|_{L_p^q}\leq C\|f\|_{\H_y^{\frac{1}{2},p,q}},\ \ \ f\in C_0^\infty([0,T]\times \R^{m+d}).
\end{split}\end{equation}
For any $\alpha\in(0,1)$ with $\alpha<1-\frac{2}{q}$,
\begin{equation}\label{u2'}\begin{split}
&\|\nabla_y u_t^{\lambda}\|_{\mathbb{H}_\sigma^{\alpha,p}}+\|(-\Delta_y)^{\frac{1}{4}} \nabla_\sigma u^{\lambda}_t\|_{\mathbb{H}_\sigma^{\alpha,p}}\\
&\leq \phi(\lambda)\|f\|_{\H_y^{\frac{1}{2},p,q}},\ \ t\in[0,T], f\in C_0^\infty([0,T]\times \R^{m+d})
\end{split}\end{equation}
holds for some  decreasing function $\phi:(0,\infty)\to(0,\infty)$ with $\lim_{\lambda\to\infty}\phi(\lambda)=0$.
\end{enumerate}
\end{thm}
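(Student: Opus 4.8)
The plan is to represent the solution by Duhamel, $u_s^\ll=(Q_\ll f)_s=\int_s^T\e^{-\ll(t-s)}P_{t-s}f_t\,\d t$, and to read off each bound from the kernel estimates of Lemma \ref{GE0} together with the Riesz identities of Lemma \ref{L3.1}. That $u^\ll=Q_\ll f$ solves \eqref{PDEf} is $(A_2)$ for the present generator $\L$ (verified in the proof of Theorem \ref{T3.1}: differentiate under the integral using $\pp_t P_t=\L P_t$ on $C_0^\infty$), and uniqueness in the natural class follows from the a priori estimates below or from the parabolic maximum principle. I would first reduce part $(2)$ to part $(1)$: since the coefficients of the $U_i$ do not involve $y$, $\L$ commutes with every $\pp_{y_l}$, hence $Q_\ll$ commutes with $(-\DD_y)^s$ for all $s\ge0$; writing $\nn_y=(-\DD_y)^{1/2}R_y$ with $R_y$ bounded on $L^p$ and inserting $(-\DD_y)^{1/4}(-\DD_y)^{-1/4}$ one gets
$$\nn_y\nn_\si Q_\ll f=(-\DD_y)^{1/4}\nn_\si Q_\ll\big((-\DD_y)^{1/4}R_y f\big),\qquad \nn_y Q_\ll f=(-\DD_y)^{1/4}Q_\ll\big((-\DD_y)^{1/4}R_y f\big),$$
with $\|(-\DD_y)^{1/4}R_y f\|_{L^q_p}\preceq\|f\|_{\H_y^{1/2,p,q}}$, and likewise $(-\DD_y)^{1/4}\nn_\si Q_\ll f=\nn_\si Q_\ll\big((-\DD_y)^{1/4}f\big)$. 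Thus \eqref{u1'} and \eqref{u2'} reduce to \eqref{gra2-L0} and \eqref{u2} applied to $g=(-\DD_y)^{1/4}R_y f$ (respectively to $g=(-\DD_y)^{1/4}f$).

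For the time-pointwise estimates \eqref{u2} and \eqref{u2'} I would differentiate under the integral and invoke \eqref{GE1}:
$$\|\nn_\si u_s^\ll\|_{\H_\si^{\aa,p}}+\|(-\DD_y)^{1/4}u_s^\ll\|_{\H_\si^{\aa,p}}\le C\int_s^T\e^{-\ll(t-s)}(t-s)^{-(1+\aa)/2}\|f_t\|_p\,\d t,$$
and then apply H\"older in $t$ with exponent $q/(q-1)$. The kernel $r\mapsto\e^{-\ll r}r^{-(1+\aa)/2}$ belongs to $L^{q/(q-1)}(0,T)$ exactly when $(1+\aa)\tfrac{q}{2(q-1)}<1$, i.e. when $\aa<1-\tfrac2q$, which is precisely the stated restriction; since its $L^{q/(q-1)}$-norm decreases to $0$ as $\ll\to\infty$ by dominated convergence, it serves as the function $\phi(\ll)$.

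The heart of the matter is \eqref{gra2-L0}, where Duhamel alone fails because by \eqref{G2} and \eqref{G3} the kernels of $\nn_y P_r$ and $\nn_\si\nn_\si P_r$ carry the non-integrable factor $r^{-1}$. Here I would invoke $L^q([0,T];L^p)$-maximal regularity for $\pp_t-(\L-\ll)$ — available uniformly in $\ll\ge0$ on the finite interval $[0,T]$ since $\L$ generates a symmetric sub-Markovian, hence analytic, semigroup on the UMD space $L^p(\R^{m+d})$ and is therefore sectorial with bounded imaginary powers (equivalently, via the Calder\'on--Zygmund theory for the associated space--time singular integral) — to obtain $\|(1-\L)Q_\ll f\|_{L^q_p}\preceq\|f\|_{L^q_p}$, the zeroth-order part $\|Q_\ll f\|_{L^q_p}\preceq\|f\|_{L^q_p}$ being Minkowski's inequality. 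It then remains to see that $\nn_\si\nn_\si(1-\L)^{-1}$, $\nn_y(1-\L)^{-1}$ and $(-\DD_y)^{1/4}\nn_\si(1-\L)^{-1}$ — being $U$- and $y$-derivatives of total intrinsic order at most two applied to the resolvent — are bounded on $L^p$: the horizontal case by iterating Lemma \ref{L3.1}(2), the vertical one from the $L^p$-boundedness of the vertical Riesz transform $(-\DD_y)^{1/2}(1-\L)^{-1}$ (classical for these step-two groups, cf. \cite{W9,FGB}), and the mixed one by interpolating between these via Lemma \ref{L3.1}(5) (after writing $\nn_\si=R_\si(-\L)^{1/2}$ with $R_\si$ bounded on $L^p$ by Lemma \ref{L3.1}(2)). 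Composing these bounded operators with $(1-\L)Q_\ll$ yields \eqref{gra2-L0}, whence \eqref{u1'} by the reduction in the first paragraph.

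I expect the main obstacle to be exactly this last step: securing the $L^q_p$-maximal-regularity bound with a constant uniform in $\ll\ge0$, and the $L^p$-boundedness of the vertical Riesz transform on the generalized Heisenberg group. Once both are in hand, the remainder is bookkeeping with the Riesz and interpolation identities already recorded in Lemma \ref{L3.1}.
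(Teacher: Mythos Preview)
Your proposal is correct and follows essentially the same route as the paper. Both represent $u^\ll=Q_\ll f$, reduce part $(2)$ to part $(1)$ by commuting $(-\DD_y)^{1/4}$ through $Q_\ll$ (the paper writes $w^\ll=(-\DD_y)^{1/4}u^\ll$ and notes it solves \eqref{PDEf} with data $(-\DD_y)^{1/4}f$), and obtain \eqref{u2} from Lemma \ref{GE0} via H\"older in time with exactly the integrability check $\aa<1-\tfrac{2}{q}$ you give.

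For \eqref{gra2-L0} the paper likewise pivots on the maximal-regularity bound $\|\L u^\ll\|_{L^q_p}\preceq\|f\|_{L^q_p}$ (stated tersely as a consequence of ``$(A_2)$, the heat equation $\pp_tP_tg=\L P_tg$, and the contraction of $P_t$ in $L^p$''), then extracts $\|\nn_\si\nn_\si u^\ll\|_{L^q_p}$ from Lemma \ref{L3.1}(2) and the mixed term $(-\DD_y)^{1/4}\nn_\si$ by interpolation via Lemma \ref{L3.1}(5), just as you outline. The one place the paper is more concrete than your sketch is the vertical Riesz transform: instead of citing a general boundedness result, it uses the explicit commutator identity $[U_i,U_j]=\sum_l(G_l)_{ij}\pp_{y_l}$ together with the nondegeneracy inequality \eqref{3.2} to deduce the \emph{pointwise} bound $|\nn_y g|^2\preceq\sum_{i,j}|U_iU_jg|^2$, from which $\|\nn_y u^\ll\|_{L^q_p}\preceq\|\nn_\si\nn_\si u^\ll\|_{L^q_p}$ follows immediately. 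This disposes of one of the two ``obstacles'' you flagged without any appeal to singular-integral theory; the remaining obstacle (parabolic maximal regularity for the sub-Laplacian) is exactly the content the paper also takes as input.
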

\begin{proof} (a)
By $(A_2)$ and Lemma \ref{L2.1},   \eqref{PDEf} has a unique solution $u^\lambda=Q_\lambda f$ such that
$$\|u^\lambda\|_\mathbb{B}:= \|u^\lambda\|_\infty + \|\nn_\si u^\lambda\|_\infty\le \psi(\ll) \|f\|_{L^q_p}, \ \ \ll\ge 0,$$
holds for some decreasing function $\psi:(0,\infty)\to(0,\infty)$ with $\lim_{\lambda\to\infty}\psi(\lambda)=0$. Let $g\in C_0^\infty(\R^{m+d})$. By $(A_2)$, the heat equation $\pp_t P_t g= \L P_t g$, and the contraction  of $P_{t}$ in $L^p(\mathbb{R}^{m+d})$, we have
\begin{align}\label{Lu00}
\|\L u^\lambda\|_{L^q_p}\preceq\|f\|_{L^q_p}.
\end{align}
Since   $(\nabla_{\sigma}g)_{i}=U_i g$, $(\nabla_{\sigma}\nabla_{\sigma}g)_{ij}=U_i U_j g$, $1\leq i,j\leq m$,    Lemma \ref{L3.1}(2) gives
\begin{align}\label{gra2-L}
\|\nabla_{\sigma}\nabla_{\sigma}g\|_p \preceq\|(-\L)^{\frac{1}{2}}\nabla_{\sigma}g\|_p\preceq\|(-\L)g\|_p.
\end{align}
Combining this with  \eqref{Lu00}, we obtain
\begin{align}\label{Lu0}
\|U_i U_j u^\lambda\|_{L^q_p}\preceq\|f\|_{L^q_p}, \ \ 1\leq i,j\leq m.
\end{align}
Since \eqref{3.1} implies  $U_i U_j=\sum_{l=1}^d (G_l)_{ij}\partial_{y_l}, i\neq j$, it follows from   \eqref{3.2} that
\begin{align}\label{p-y}
\sum_{l=1}^d |\partial_{y_l}g|^2\preceq\sum_{i,j=1}^{m}|U_i U_jg|^2
\end{align}
This together with \eqref{Lu0} leads to
\begin{align}\label{gra-y0}
\|\nabla_y u^\lambda\|_{L^q_p}\preceq\|f\|_{L^q_p}.
\end{align}
On the other hand, \eqref{half} implies
\begin{align}\label{gra-y00}
\|(-\Delta_y)^{\frac{1}{2}} u^\lambda\|_{L^q_p}\preceq\|f\|_{L^q_p}.
\end{align}
Applying  Lemma \ref{L3.1}(5) with $\theta=\frac{1}{2}$ and Young's inequality, we have
\begin{align*}
\|(-\Delta_y)^{\frac{1}{4}} \nabla_\sigma u^{\lambda}\|_{L^q_p}\preceq\|(-\Delta_y)^{\frac{1}{2}}u^\lambda\|_{L^q_p}+\|(-\L)u^\lambda\|_{L^q_p}\preceq  \|f\|_{L^q_p}.
\end{align*}
Combining this with \eqref{Lu0} and \eqref{gra-y0}, we prove \eqref{gra2-L0}.

Next, recall that
$$u^\ll_s=(Q_\ll f)_s:=\int_s^T \e^{-\ll (t-s)}P_{t-s} f_t\d t.$$ By \eqref{GE1}, H\"{o}lder's inequality,   and noticing that $\alpha<1-\frac{2}{q}$ implies $\frac{q}{q-1}(-\frac{\alpha}{2}-\frac{1}{2})>-1$, we obtain
\begin{equation*}\begin{split}
&\|\nabla_\sigma u^\lambda_s\|_{\mathbb{H}_\sigma^{\alpha,p}}+\|(-\Delta_y)^{\frac{1}{4}}u^\lambda_s\|_{\mathbb{H}_\sigma^{\alpha,p}}\\
&\preceq \int_{s}^{T}\e^{-\lambda (t-s)}(t-s)^{-\frac{\alpha}{2}-\frac{1}{2}}\|f_t\|_{p}\d t\\
&\preceq \left(\int_{0}^{T}\e^{-\lambda \frac{q}{q-1}(t-s)}(t-s)^{\frac{q}{q-1}(-\frac{\alpha}{2}-\frac{1}{2})}\d t\right)^{\frac{q-1}{q}}\|f\|_{L^q_p}\\
&=: \phi(\lambda)\|f\|_{L^q_p},
\end{split}\end{equation*}
where $\phi$ is decreasing  with $\lim_{\lambda\to\infty}\phi(\lambda)=0$.
 Therefore, assertion (1) is proved.

(b)
Let $w^\ll=(-\Delta_y)^{\frac{1}{4}}u^\ll$, where $u^\ll:=Q_\ll f$ is the unique solution of  \eqref{PDEf}. We have
\begin{equation}\begin{split}\label{we}
\pp_t w_t^\ll= (\ll-\L) w_t^\ll -(-\Delta_y)^{\frac{1}{4}}f_t,\ \ w_T^\ll=0.
\end{split}\end{equation}
Applying \eqref{half} and  \eqref{gra2-L0} for $(-\DD_y)^{\ff 1 4} f$ replacing $f$, we obtain
\begin{equation*}\begin{split}
\|\nabla_y \nabla_{\sigma} u^\lambda\|_{L^q_p}\asymp \|(-\DD_y)^{\ff 1 2} \nn_\si u^\ll\|_{L_p^q}
=&\|(-\Delta_y)^{\frac{1}{4}}\nabla_\sigma w^\lambda\|_{L^q_p}
\preceq \|(-\Delta_y)^{\frac{1}{4}} f\|_{L^q_p}\preceq\|f\|_{\H_y^{\frac{1}{2},p,q}}.
\end{split}\end{equation*} So, \eqref{u1'} holds.

Finally, applying \eqref{u1'} to $(w^\ll, (-\DD_y)^{\ff 1 4}f)$ replacing $(u^\ll, f)$, we prove
 \eqref{u2'}.
\end{proof}

We now  investigate the regularity of the solution to the following singular equation for $\R^{m+d}$-valued $u_t=(u_t^1,\cdots, u_t^{m+d})$:
\begin{align}\label{PDEuf}
\pp_t u_t= (\ll-\L) u_t-\nabla_{\sigma \mathbf{b}_t} u_t-\sigma \mathbf{b}_t,\ \ u_T=0.
\end{align}

\begin{thm} \label{T-PDE2} Let $p,q\ge 1$ satisfy $\eqref{3.7'}$.
\begin{enumerate}
\item[$(1)$] Assume $\mathbf{b}\in C_0^\infty([0,T]\times \mathbb{R}^{m+d};\mathbb{R}^{m})$.
Then there exists a constant $\lambda_0>0$ such that for any $\lambda\geq\lambda_0$, the equation \eqref{PDEuf} has a unique solution
 $($denoted by $\Xi_\lambda \mathbf{b})$ satisfying
\begin{equation}\begin{split}\label{u1''}
\|\nabla_\sigma\nabla_\sigma \Xi_\lambda \mathbf{b}\|_{L_p^q}\preceq\|\mathbf{b}\|^2_{L_p^q}+\|\mathbf{b}\|_{L_p^q}\|\sigma \mathbf{b}\|_{L_p^q}+\|\sigma \mathbf{b}\|_{L_p^q}.
\end{split}\end{equation}
\item[$(2)$]
There exists a constant $\lambda_1\geq\lambda_0$ such that for any $\lambda\geq\lambda_1$, 
 \begin{equation}\begin{split}\label{u1'''}
\|\nabla_y\nabla_\sigma \Xi_\lambda \mathbf{b}\|_{L_p^q}\preceq\|\sigma \mathbf{b}\|_{\H_y^{\frac{1}{2},p,q}}+\|\sigma \mathbf{b}\|_{\H_y^{\frac{1}{2},p,q}}\|\mathbf{b}\|_{\H_y^{\frac{1}{2},p,q}}.
\end{split}\end{equation}
Moreover, for any $\alpha\in(\frac{m+2d}{p},1-\frac{2}{q})$,
\begin{equation}\begin{split}\label{u2'''-0}
\sup_{t\in [0,T]}\|\nabla_y (\Xi_\lambda \mathbf{b})_t\|_{\H_\sigma^{\alpha,p}}\preceq \phi(\lambda)\left(\|\sigma \mathbf{b}\|_{\H_y^{\frac{1}{2},p,q}}+\|\sigma \mathbf{b}\|_{\H_y^{\frac{1}{2},p,q}}\|\mathbf{b}\|_{\H_y^{\frac{1}{2},p,q}}\right)
\end{split}\end{equation}
holds for some decreasing function $\phi:(0,\infty)\to(0,\infty)$ with $\lim_{\lambda\to\infty}\phi(\lambda)=0$.
\end{enumerate}
\end{thm}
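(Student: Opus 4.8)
The plan is to realize \eqref{PDEuf} as an instance of the linear equation \eqref{PDEf} of Theorem \ref{T-PDE} by freezing the first-order term into the source. As shown in the proof of Theorem \ref{T3.1}, under \eqref{3.7'} one has $(p,q)\in\scr K_1\cap\scr K_2$, and $(A_1)$, $(A_2)$ as well as Theorem \ref{T2.0}(3)(i) (i.e.\ $\Xi_\ll f\in C^{1,2}$ for $f\in C_0^\infty$) hold for the present model. Hence Theorem \ref{T2.0}(1) provides, for $\ll\ge\ll_0$ with $\ll_0$ large, a unique $\Xi_\ll\mathbf b\in\BB^{m+d}$ satisfying the Duhamel identity \eqref{QW}; applying \eqref{PPH} with $\tilde{\mathbf b}=0$ gives $\|\Xi_\ll\mathbf b\|_{\BB}\le 2\psi(\ll)\|\si\mathbf b\|_{L^q_p}$, so after enlarging $\ll_0$ so that $\psi(\ll_0)\le\ff12$ we may assume $\|\nn_\si\Xi_\ll\mathbf b\|_\infty\le\|\si\mathbf b\|_{L^q_p}$. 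I would then observe that a bootstrap along the fixed-point iteration defining $\Xi_\ll\mathbf b$ (each iterate being $Q_\ll$ applied to a smooth, compactly supported function) renders $\Xi_\ll\mathbf b$ smooth in $x$, so that
$$g:=\nn_{\si\mathbf b}\Xi_\ll\mathbf b+\si\mathbf b=\<\nn_\si\Xi_\ll\mathbf b,\mathbf b\>+\si\mathbf b$$
belongs to $C_0^\infty([0,T]\times\R^{m+d})$ (its support lying in that of $\mathbf b$) and $\Xi_\ll\mathbf b=Q_\ll g$ solves \eqref{PDEf} with $f=g$; uniqueness in \eqref{PDEuf} comes from that in Theorem \ref{T2.0}(1).

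For assertion (1), the pointwise bound $|\<\nn_\si\Xi_\ll\mathbf b,\mathbf b\>|\le\|\si\mathbf b\|_{L^q_p}|\mathbf b|$ yields $\|g\|_{L^q_p}\preceq\|\si\mathbf b\|_{L^q_p}(1+\|\mathbf b\|_{L^q_p})$, which is controlled by the right side of \eqref{u1''}; feeding $f=g$ into \eqref{gra2-L0} (applied componentwise to the $\R^{m+d}$-valued unknown) gives $\|\nn_\si\nn_\si\Xi_\ll\mathbf b\|_{L^q_p}\preceq\|g\|_{L^q_p}$, hence \eqref{u1''}.

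For assertion (2), I would fix $\alpha\in(\ff{m+2d}p,1-\ff2q)$---an interval nonempty exactly because of \eqref{3.7'}, which moreover forces $p>m+2d$---and estimate $\|g\|_{\H_y^{1/2,p,q}}\le\|\<\nn_\si\Xi_\ll\mathbf b,\mathbf b\>\|_{\H_y^{1/2,p,q}}+\|\si\mathbf b\|_{\H_y^{1/2,p,q}}$, the last term being finite since $\si$ is $y$-independent and $\mathbf b\in C_0^\infty$. The first term I would handle with Lemma \ref{b-gra} (applied componentwise, with $\beta=\ff12$, to $u=(\Xi_\ll\mathbf b)_t$ for a.e.\ $t$), whose hypotheses hold because \eqref{u2}, \eqref{u2'} and Lemma \ref{L3.1}(2) provide the required $\H_\si^{\alpha,p}$- and $\H_y^{1/2,p}$-regularity of $\nn_\si(\Xi_\ll\mathbf b)_t$; this gives
$$\|\<\nn_\si(\Xi_\ll\mathbf b)_t,\mathbf b_t\>\|_{\H_y^{1/2,p}}\preceq\|\mathbf b_t\|_{\H_y^{1/2,p}}\big(\|(-\Delta_y)^{1/4}\nn_\si(\Xi_\ll\mathbf b)_t\|_{\H_\si^{\alpha,p}}+\|\nn_\si(\Xi_\ll\mathbf b)_t\|_{\H_\si^{\alpha,p}}\big).$$
Since $\Xi_\ll\mathbf b=Q_\ll g$, estimates \eqref{u2} and \eqref{u2'} with $f=g$ bound the bracket by $\phi(\ll)(\|g\|_{L^q_p}+\|g\|_{\H_y^{1/2,p,q}})\preceq\phi(\ll)\|g\|_{\H_y^{1/2,p,q}}$ uniformly in $t$, so integrating in $t$ and using H\"older yields
$$\|g\|_{\H_y^{1/2,p,q}}\preceq\phi(\ll)\|\mathbf b\|_{\H_y^{1/2,p,q}}\|g\|_{\H_y^{1/2,p,q}}+\|\si\mathbf b\|_{\H_y^{1/2,p,q}}.$$
Choosing $\ll_1\ge\ll_0$ with $\phi(\ll_1)$ small enough to absorb the first term on the right, I obtain $\|g\|_{\H_y^{1/2,p,q}}\preceq\|\si\mathbf b\|_{\H_y^{1/2,p,q}}$ for $\ll\ge\ll_1$; then \eqref{u1'} with $f=g$ gives \eqref{u1'''}, and \eqref{u2'} with $f=g$ gives \eqref{u2'''-0}.

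The hard part will be this closed estimate for $\|g\|_{\H_y^{1/2,p,q}}$: controlling the frozen term $\<\nn_\si\Xi_\ll\mathbf b,\mathbf b\>$ in the anisotropic fractional space requires measuring $\nn_\si\Xi_\ll\mathbf b$ in a group-H\"older norm, which is available only for $\alpha>\ff{m+2d}p$ through the subelliptic Sobolev embedding of Lemma \ref{SET} (note the homogeneous dimension $m+2d$ in place of $m+d$)---this is precisely what Lemma \ref{b-gra} encapsulates---after which one closes the loop by absorbing $\|g\|_{\H_y^{1/2,p,q}}$ via $\phi(\ll)\to0$, forcing $\ll_1>\ll_0$. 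The residual technical points---checking that the hypotheses of Lemma \ref{b-gra} hold (i.e.\ $(-\L)^{(1+\alpha)/2}(\Xi_\ll\mathbf b)_t\in\H_y^{1/2,p}$, which follows from Lemma \ref{L3.1}(2), \eqref{u2}--\eqref{u2'} and the commutation of $\pp_{y_l}$ with $\L$), and that $g$ is genuinely smooth so Theorem \ref{T-PDE} applies---are routine.
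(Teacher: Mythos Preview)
Your proposal is correct and uses the same core ingredients as the paper's proof: the linear estimates of Theorem~\ref{T-PDE} applied with source $g=\nabla_{\sigma\mathbf b}\Xi_\lambda\mathbf b+\sigma\mathbf b$, together with Lemma~\ref{b-gra} (with $\beta=\tfrac12$) to control $\|g\|_{\H_y^{1/2,p,q}}$, and smallness of $\phi(\lambda)$ to close. The one structural difference is in how part~(2) is closed. The paper sets up a fresh contraction mapping $\Phi u=Q_\lambda(\nabla_{\sigma\mathbf b}u+\sigma\mathbf b)$ on the Banach space $H$ with norm $\|u\|_H=\sup_t(\|\nabla_\sigma u_t\|_{\H_\sigma^{\alpha,p}}+\|(-\Delta_y)^{1/4}\nabla_\sigma u_t\|_{\H_\sigma^{\alpha,p}})$, obtaining $\Xi_\lambda\mathbf b\in H$ directly as the fixed point and reading off \eqref{u1'''}--\eqref{u2'''-0} from $\|\Xi_\lambda\mathbf b\|_H\preceq\phi(\lambda)\|\sigma\mathbf b\|_{\H_y^{1/2,p,q}}$. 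You instead take the solution already built in $\BB^{m+d}$ by Theorem~\ref{T2.0}(1), argue that $g\in C_0^\infty$ (via smoothness of $\Xi_\lambda\mathbf b$), and run an absorption estimate on $\|g\|_{\H_y^{1/2,p,q}}$. Both routes are valid; the paper's fixed-point packaging avoids having to justify a~priori that $\|g\|_{\H_y^{1/2,p,q}}<\infty$, whereas your bootstrap needs this finiteness (which you obtain from smoothness of $\Xi_\lambda\mathbf b$ when $\mathbf b\in C_0^\infty$) before one can absorb. The smoothness claim is correct---it follows from hypoellipticity of $\partial_t-\scr L-\nabla_{\sigma\mathbf b}$, or equivalently from the assumption (3)(i) of Theorem~\ref{T2.0} combined with iteration---but is a genuine extra step that the paper's argument sidesteps.
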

\begin{proof}
(1) By Lemma \ref{L2.1} and Theorem \ref{T2.0}, there exists a constant $\lambda_0>0$ such that for any $\lambda\geq\lambda_0$, the equation \eqref{PDEuf} has a unique solution $u^\lambda (=:\Xi_\lambda \mathbf{b})$.
By \eqref{gra2-L0}, we have
\begin{equation*}\begin{split}
\|\nabla_\sigma\nabla_\sigma \Xi_\lambda \mathbf{b}\|_{L_p^q}&
\preceq\|\nabla_{\sigma \mathbf{b}} \Xi_\lambda \mathbf{b}+\sigma \mathbf{b}\|_{L_p^q}\\
&\preceq \|\mathbf{b}\|_{L_p^q}\|\nabla_\sigma \Xi_\lambda \mathbf{b}\|_\infty+\|\sigma \mathbf{b}\|_{L_p^q}\\
&\preceq\|\mathbf{b}\|^2_{L_p^q}+\|\mathbf{b}\|_{L_p^q}\|\sigma \mathbf{b}\|_{L_p^q}+\|\sigma \mathbf{b}\|_{L_p^q},
\end{split}\end{equation*}

(2) Let $H$ be the space of measurable functions $u:[0,T]\times \mathbb{R}^{m+d}\to\mathbb{R}^{m+d}$ such that
$$\|u\|_H:= \sup_{t\in[0,T]}\left(\left\|\nabla_\sigma u_t\right\|_{\H_\sigma^{\alpha,p}}+\|(-\Delta_y)^{\frac{1}{4}}\nabla_\sigma u_t\|_{\H_\sigma^{\alpha,p}}\right)<\infty.$$
Then $H$ is a Banach space with the norm $\|\cdot\|_{H}$ defined above.
For any $u\in H$, let $\Phi u$ be the solution to the following equation:
\begin{align}\label{PDEuf'}
\pp_t (\Phi u)_t= (\ll-\L) (\Phi u)_t-\nabla_{\sigma \mathbf{b}_t} u_t-\sigma \mathbf{b}_t,\ \ u_T=0.
\end{align} By \eqref{IT2}, we have $\Phi u=Q_\ll(\nn_{\si\b} u-\si\b).$ By Lemma \ref{b-gra} with $\beta=\frac{1}{2}$, we have
\begin{equation}\begin{split}\label{b-gra1}
\|\nabla_{\sigma \mathbf{b}}u\|_{\H_y^{\frac{1}{2},p,q}}
&\preceq \|\mathbf{b}\|_{\H_y^{\frac{1}{2},p,q}}\left(\sup_{t\in[0,T]}\|(-\Delta_y)^{\frac{1}{4}}\nabla_\sigma u_t\|_{\H_\sigma^{\alpha,p}}+\sup_{t\in[0,T]}\left\|\nabla_\sigma u_t\right\|_{\H_\sigma^{\alpha,p}}\right).
\end{split}\end{equation}
Combining this with \eqref{u2} and \eqref{u2'}, we obtain
\begin{equation}\begin{split}\label{one-four}
\|\Phi u\|_H&\preceq \phi(\lambda)\|\nabla_{\sigma \mathbf{b}}u+\sigma \mathbf{b}\|_{\H_y^{\frac{1}{2},p,q}}\preceq \phi(\lambda)\left(\|\sigma \mathbf{b}\|_{\H_y^{\frac{1}{2},p,q}}+\|\mathbf{b}\|_{\H_y^{\frac{1}{2},p,q}}\|u\|_H\right)<\infty.
\end{split}\end{equation}
So,   $\Phi u\in H$ for $u\in H$. Moreover, for any $u$, $\tilde{u}\in H$,   \eqref{PDEuf'} and \eqref{one-four} imply
\begin{equation*}
\|\Phi u-\Phi \tilde{u}\|_{H} \preceq \phi(\lambda)\|\nabla_{\sigma \mathbf{b} }(u-\tilde{u})\|_{\H_y^{\frac{1}{2},p,q}}
 \preceq \phi(\lambda)\|\mathbf{b}\|_{\H_y^{\frac{1}{2},p,q}}\|u-\tilde{u}\|_H.
 \end{equation*}
Since $\phi(\ll)\to 0$ as $\ll\to\infty$, there exists a constant $\lambda_1>0$ such that $\phi(\lambda)\|\mathbf{b}\|_{\H_y^{\frac{1}{2},p,q}}<\frac{1}{2}$ for $\lambda\ge \lambda_1$. Then by the fixed point theorem, for any $\lambda\ge \lambda_1$, the equation  \eqref{PDEuf} has a unique solution $\Xi_\lambda \mathbf{b}\in H.$   Furthermore, \eqref{one-four} implies
\begin{equation}\begin{split}\label{one-four1}
\|\Xi_\lambda \mathbf{b}\|_{H}
&\preceq \phi(\lambda)\|\sigma \mathbf{b}\|_{\H_y^{\frac{1}{2},p,q}},\ \ \lambda\geq \lambda_1.
\end{split}\end{equation}
This together with  \eqref{b-gra1} gives
\begin{equation}\begin{split}\label{b-gra2}
\|\nabla_{\sigma \mathbf{b}}\Xi_\lambda \mathbf{b}\|_{\H_y^{\frac{1}{2},p,q}}\preceq\|\sigma \mathbf{b}\|_{\H_y^{\frac{1}{2},p,q}}\|\mathbf{b}\|_{\H_y^{\frac{1}{2},p,q}}, \ \ \lambda\geq \lambda_1.
\end{split}\end{equation}
Then \eqref{u1'} and \eqref{b-gra2} imply
\begin{equation*}\begin{split}
\|\nabla_y\nabla_\sigma \Xi_\lambda \mathbf{b}\|_{L^q_p}\preceq\|\nabla_{\sigma \mathbf{b}} u^\lambda+\sigma \mathbf{b}\|_{\H_y^{\frac{1}{2},p,q}}\preceq \|\sigma \mathbf{b}\|_{\H_y^{\frac{1}{2},p,q}}+\|\sigma \mathbf{b}\|_{\H_y^{\frac{1}{2},p,q}}\|\mathbf{b}\|_{\H_y^{\frac{1}{2},p,q}}.
\end{split}\end{equation*}
Similarly,   \eqref{u2'} and \eqref{b-gra2} yield
\begin{equation*}\begin{split}
\sup_{t\in [0,T]}\|\nabla_y (\Xi_\lambda \mathbf{b})_t\|_{\H_\sigma^{\alpha,p}}&\preceq \phi(\lambda)\|\nabla_{\sigma \mathbf{b}}u^\lambda+\sigma \mathbf{b}\|_{\H_y^{\frac{1}{2},p,q}}\\
&\preceq \phi(\lambda)\left(\|\sigma \mathbf{b}\|_{\H_y^{\frac{1}{2},p,q}}+\|\sigma \mathbf{b}\|_{\H_y^{\frac{1}{2},p,q}}\|\mathbf{b}\|_{\H_y^{\frac{1}{2},p,q}}\right).
\end{split}\end{equation*}
Then the proof is finished.
\end{proof}

We are now ready to   prove \eqref{D1} and \eqref{D1'}.

\beg{proof}[Proof of \eqref{D1} and \eqref{D1'}]   We first consider smooth $\b$ then extend to the situation of Theorem \ref{T3.1}.

(a) Let  $\mathbf{b}\in C^\infty([0,T]\times \mathbb{R}^{m+d}, \mathbb{R}^{m})$. Then for any $h\in C_0^\infty(\mathbb{R}^{m+d})$, $h\mathbf{b}\in C^\infty([0,T]\times \mathbb{R}^{m+d}, \mathbb{R}^{m})$. Applying Theorem \ref{T-PDE2}, we obtain that 
\begin{equation}\begin{split}\label{u1h}
\|\nabla_\sigma\nabla_\sigma \Xi_\lambda h\mathbf{b}\|_{L_{p}^{q}}\preceq \|h\mathbf{b}\|^2_{L_{p}^{q}}+\|h\mathbf{b}\|_{L_{p}^{q}}\|\sigma h\mathbf{b}\|_{L_{p}^{q}}+\|\sigma h\mathbf{b}\|_{L_{p}^{q}}
\end{split}\end{equation}
and
\begin{equation}\begin{split}\label{u1'''h}
\|\nabla_y\nabla_\sigma \Xi_\lambda h\mathbf{b}\|_{L_{p}^{q}}\preceq\|\sigma h\mathbf{b}\|_{\H_y^{\frac{1}{2},p,q}}+\|\sigma h\mathbf{b}\|_{\H_y^{\frac{1}{2},p,q}}\|h\mathbf{b}\|_{\H_y^{\frac{1}{2},p,q}}
\end{split}\end{equation}
for any $\lambda\geq \lambda_1$.
Next,  by Lemma \ref{L3.1}(4),  \eqref{u2'''-0},  and Theorem \ref{T2.0}(1),
\begin{equation}\begin{split}\label{u2'''-0h}
\|\nabla_y (\Xi_\lambda h\mathbf{b})\|_{\infty}\preceq \phi(\lambda)\left(\|\sigma h\mathbf{b}\|_{\H_y^{\frac{1}{2},p,q}}+\|\sigma h\mathbf{b}\|_{\H_y^{\frac{1}{2},p,q}}\|h\mathbf{b}\|_{\H_y^{\frac{1}{2},p,q}}\right)
\end{split}\end{equation}
and
\begin{equation}\begin{split}\label{sigma-0h}
\|\nabla_\sigma (\Xi_\lambda h\mathbf{b})\|_{\infty}\leq \phi(\lambda)(\|\sigma h\mathbf{b}\|_{L^{q}_{p}}+\|h\mathbf{b}\|_{L^{q}_{p}})
\end{split}\end{equation}
hold  for large $\ll>0$ and  some decreasing function $\phi:(0,\infty)\to(0,\infty)$ with $\lim_{\lambda\to\infty}\phi(\lambda)=0$.

Moreover,    for any $R>0$, there exists a constant $c(R)>0$ such that
\begin{equation}\begin{split}\label{K-N0}
|\nabla f|^2(x)&\leq c(R)\left(\sum_{i=1}^m|U_i f|^2(x)+\sum_{i=1}^d|\partial_{y_i} f|^2(x)\right)\\
&=c(R)\left(|\nabla_\sigma f|^2(x)+|\nabla_yf|^2(x)\right),\ \ |x|\leq R, f\in C^1(\R^{m+d}).
\end{split}\end{equation}
Combining \eqref{u2'''-0h}-\eqref{K-N0}, we conclude that for large $\ll>0$,
\begin{equation}\begin{split}\label{nabla h}
\|h\nabla (\Xi_\lambda h\mathbf{b})\|_{\infty}\leq C_{\sigma,h}\|h\|_\infty\phi(\lambda)(\|h\mathbf{b}\|_{\H_y^{\frac{1}{2},p,q}}+\|h\mathbf{b}\|^2_{\H_y^{\frac{1}{2},p,q}}),
\end{split}\end{equation}
where $C_{\sigma,h}>0$ is a constant depending on $\mathrm{supp} h$ and $\|\sigma 1_{\mathrm{supp} h}\|_{\infty}$. Similarly, \eqref{u1h},  \eqref{u1'''h} and \eqref{K-N0} imply
\begin{equation}\begin{split}\label{nabla^2 h}
\|\nabla h\nabla_\sigma (\Xi_\lambda h\mathbf{b})\|_{L^{q}_{p}}\leq C_{\sigma,h}\Big(\|h\|_\infty+\|h'\|_\infty)(\|h\mathbf{b}\|_{\H_y^{\frac{1}{2},p,q}}+\|h\mathbf{b}\|^2_{\H_y^{\frac{1}{2},p,q}}\Big)
\end{split}\end{equation} for large $\ll>0$ and some
 constant $C_{\sigma,h}>0$ depending on $\mathrm{supp} h$ and $\|\sigma 1_{\mathrm{supp} h}\|_{\infty}$.
 Therefore, \eqref{D1} and \eqref{D1'} are proved.

(b) Now,  assume that for any $h\in C_0^\infty(\R^{m+d})$ we have
\beq\label{*H2}\|h \mathbf{b}\|_{\H_y^{\frac{1}{2},p,q}}=\|(1-\Delta_y)^{\ff 1 4} (h\b)\|_{L^{q}_{p}}<\infty.\end{equation}
Let $\rho$ be a non-negative smooth function with compact support in $\mathbb{R}^{m+d}$ and
$\int_{\mathbb{R}^{m+d}}\rho(z)\d z=1$.
For any $n\in \mathbb{N}$, let
\begin{equation}\begin{split}\label{Ap}
\rho_n(z)=n^{m+d}\rho(nz),\ \ \mathbf{b}^n=\rho_n\ast (h\mathbf{b}), \ \ z\in\mathbb{R}^{m+d}.
\end{split}\end{equation}
Then
$$\lim_{n\to\infty}\|\mathbf{b}^n-h\mathbf{b}\|_{\H_y^{\frac{1}{2},p,q}}=0.$$ Combining this with \eqref{nabla h} and \eqref{nabla^2 h} for $\b^n$ replacing $h\b$, and by an approximation method, we may find out a   constant $\lambda_1>0$  not depending on $n$,  such that for any $\lambda \geq \lambda_1$, the unique solution $u^\lambda (=:\Xi_\lambda h\mathbf{b})$ of \eqref{PDEuf}  satisfies
\begin{equation}\begin{split}\label{nabla hge}
\|h\nabla (\Xi_\lambda h\mathbf{b})\|_{\infty}\preceq (1+C_{\sigma,h})\|h\|_\infty\phi(\lambda)\big(\|h\mathbf{b}\|_{\H_y^{\frac{1}{2},p,q}}+\|h\mathbf{b}\|^2_{\H_y^{\frac{1}{2},p,q}}\big),
\end{split}\end{equation}
and
\begin{equation}\begin{split}\label{nabla^2 hge}
\|\nabla h\nabla_\sigma (\Xi_\lambda h\mathbf{b})\|_{L^{q}_{p}}\preceq (1+C_{\sigma,h})(\|h\|_\infty+\|h'\|_\infty)\big(\|h\mathbf{b}\|_{\H_y^{\frac{1}{2},p,q}}+\|h\mathbf{b}\|^2_{\H_y^{\frac{1}{2},p,q}}\big),
\end{split}\end{equation}
here, $C_{\sigma,h}$, $\phi$ are in \eqref{nabla h} and \eqref{nabla^2 h}. Combining these with \eqref{*H2}, we finish the proof. \end{proof}

\paragraph{Acknowledgement.} The authors would like to thank the referee  for corrections and helpful comments.

\end{document}